\newtheorem{theorem}{Theorem}[section]
\newtheorem{corollary}{Corollary}
\newtheorem{lemma}[theorem]{Lemma}
\newtheorem{proposition}{Proposition}
\newtheorem{remark}{Remark}[section]
\title{\Large \bf Longtime dynamics of a semilinear Lam\'e system}
\author{
	\small \bf Lito Edinson Bocanegra-Rodr{\'i}guez \\
	\footnotesize Institute of Mathematical and Computer Sciences, University of S\~ao Paulo, 13566-560 S\~ao Carlos, SP, Brazil \medskip \\
	\small \bf To Fu Ma \\ 
	\footnotesize Department of Mathematics, University of Bras\'{\i}lia, 70910-900 Bras\'{\i}lia, DF, Brazil  \medskip \\
	\small \bf Paulo Nicanor Seminario-Huertas  \thanks{Corresponding author.} \\ 
\footnotesize  Academic Department of Mathematics, National University of Callao, Bellavista 07011, Callao, Peru \\ 
\footnotesize Department of Mathematics, University of Bras\'{\i}lia, 70910-900 Bras\'{\i}lia, DF, Brazil \medskip  \\
	\small \bf Marcio Antonio Jorge Silva \\
	\footnotesize Department of Mathematics, State University of Londrina, 86057-970 Londrina, PR, Brazil
}
\date{}
\begin{document}

\maketitle

\begin{abstract}
This paper is concerned with longtime dynamics of semilinear Lam\'e systems 
$$
\partial^2_t u - \mu \Delta u - (\lambda + \mu) \nabla {\rm div} u + \alpha \partial_t u + f(u) = b,
$$
defined in bounded domains of $\mathbb{R}^3$ with Dirichlet boundary condition. 
Firstly, we establish the existence of finite dimensional global attractors subjected to a critical forcing $f(u)$. 
Writing $\lambda + \mu$ as a positive parameter $\varepsilon$, we discuss some physical aspects of the limit 
case $\varepsilon \to 0$. Then, we show the upper-semicontinuity of attractors with respect to the parameter when $\varepsilon \to 0$. To our best knowledge, the analysis of attractors for dynamics of Lam\'e systems has not been studied before. 
\end{abstract}

\medskip

\noindent{\bf Keywords:} System of elasticity, global attractor, gradient system, upper-semicontinuity.

\medskip

\noindent{\bf MSC:} 35B41, 74H40, 74B05.

\section{Introduction} \label{sec-introduction}
\setcounter{equation}{0}

The Lam\'e system is a classical model for isotropic elasticity. In three dimensions,   
it is given by 
\begin{eqnarray}
\left\lbrace
\begin{array}{ll}
\partial^2_{t} u - \mu \Delta u - (\lambda+\mu) \nabla {\rm div} u = 0 & \text{in} \,\; \Omega\times\mathbb{R}^{+}, \smallskip \\
u=0 &  \text{on} \,\; \partial \Omega\times\mathbb{R}^{+}, \smallskip  \\
u(0)=u_0, \,\; \partial_t u(0)=u_1 &  \text{in} \; \Omega,
\end{array} 
\right.
\label{1}
\end{eqnarray}
where $\Omega$ is a bounded domain of $\mathbb{R}^3$ with smooth boundary $\partial \Omega$, 
representing the elastic body in its rest configuration. Here, the vector $u=(u_1,u_2,u_3)$ denotes displacements  
and $\lambda,\mu$ are Lam\'e's constants with $\mu > 0$. In this model, the stress tensor is given by
\begin{equation}\label{ten}
\sigma(u)_{ij}=\lambda {\rm div} u \, \delta_{ij} + \mu\left(\frac{\partial u_i}{\partial x_j}+\frac{\partial u_j}{\partial x_i} \right). 
\end{equation}
We refer the reader to \cite{Achenbach,Ciarlet,Love,Teodorescu} for modeling aspects and \cite{Cerv2001,KK,Pujol} for some applications of vector waves. Later, we discuss the physical justification of taking limit $\lambda + \mu \to 0$.

\smallskip 

We note that the energy functional corresponding to the linear system \eqref{1} is given by 
$$
E_{\ell}(t) = \frac{1}{2}\int_{\Omega}\left( |\partial_t u|^2 + \mu |\nabla u |^2 + (\lambda + \mu) |{\rm div}u|^2 \right) \, dx,
$$
which is conservative since we have formally $\frac{d}{dt}E_{\ell}(t)=0$. This motivated several papers on such systems where the main feature is finding suitable damping and controllers in order to get 
stabilization and controllability, respectively. Let us recall some related results. 
The exponential stabilization of Lam\'e systems, defined in exterior domains of $\mathbb{R}^3$ with Dirichlet boundary, 
was studied by Yamamoto \cite{Yama}. Uniform stabilization by nonlinear boundary feedback was studied by Horn \cite{Horn}. 
Polynomial stabilization with interior localized damping was studied by Astaburuaga and Char\~ao \cite{Asta}. 
By adding viscoelastic dissipation of memory type, Bchatnia and Guesmia \cite{BG} established the so-called general stability. 
More recently, Benaissa and Gaouar \cite{Benaissa} studied strong stability of Lam\'e systems with fractional order boundary damping. With respect to controllability, we refer the reader to, for instance, \cite{Alabau,BL,Lagnese,Lions,Liu}.   

\medskip

Our objective in the present article is different and goes further than considering stabilization. We are concerned with longtime dynamics of Lam\'e systems under nonlinear forces. 
Here, the above linear system (\ref{1}) becomes 
\begin{eqnarray}
\left\lbrace
\begin{array}{ll}
\partial^2_{t} u - \mu \Delta u - (\lambda+\mu) \nabla {\rm div} u + \alpha \partial_t u + f(u) = b & \text{in} \,\; \Omega\times\mathbb{R}^{+}, \smallskip \\
u=0 &  \text{on} \,\; \partial \Omega\times\mathbb{R}^{+}, \smallskip \\
u(0)=u_0, \,\; \partial_t u(0)=u_1 &  \text{in} \; \Omega,
\end{array} 
\right.
\label{problem}
\end{eqnarray}
where $\alpha \partial_t u$ ($\alpha >0$) represents a frictional dissipation, 
$f(u)$ stands for a nonlinear structural forcing, and $b=b(x)$ represents some external force. 
As far as we know, the long-time dynamics of semilinear Lam\'e systems \eqref{problem} has not been studied before. 
We present two main results. 
Firstly, we establish the existence of global attractors with finite fractal-dimensional. 
Secondly, by taking $\lambda + \mu = \varepsilon >0$, we study the upper semicontinuity of 
attractors with respect to $\varepsilon \to 0$. 

\smallskip 

In what follows we summarize the main contributions of the paper.

\smallskip

$(i)$ Our first result establishes existence of global attractors for dynamics of problem (\ref{problem}) under nonlinear forces with critical growth $|f_i(u)| \approx |u|^p + |u_i|^3$, $p<3$, $i=1,2,3$. Under careful energy estimates, we show that the system is gradient and quasi-stable in the sense of \cite{chueshov-book,Von}. Then we conclude that the attractors are smooth and have finite fractal dimension. See Theorem \ref{3.16}.

\smallskip

$(ii)$ In Section \ref{sec-phys}, we discuss the physical meaning of the limit case $\lambda + \mu \to 0$ in real world applications. This arises mainly in Seismology.

\smallskip

$(iii)$ Finally, setting $\varepsilon = \lambda + \mu \to 0$, we consider the $\varepsilon$-problem   
$$
\partial^2_{t} u - \mu \Delta u - \varepsilon  \nabla {\rm div} u + \alpha \partial_t u + f(u) = b,
$$
depending on a parameter $\varepsilon \ge 0$. In Theorem \ref{singular} we show that the weak solutions of $\varepsilon$-problem 
converges to the vectorial wave equation with $\varepsilon=0$. Then we provide all necessary analysis to prove that 
corresponding family of attractors $\mathcal{A}_{\varepsilon}$ is upper semicontinuous with respect to $\varepsilon \to 0$. This is given in a suitable phase space. See Theorem \ref{upper}.  	

\section{Preliminaries}\label{sec-well-posed}
\setcounter{equation}{0}

\subsection{Physical aspects of $\lambda + \mu \to 0$} \label{sec-phys}

From the Hooke law and from the constitutive law (\ref{ten}) referring to elastic bodies, one derives the equation
\begin{equation}\label{ex}
\rho\partial^2_{t} u-\mu \Delta u-(\lambda+\mu)\nabla{\rm div}u=\rho \mathcal{F},
\end{equation}
which may represent the displacement of vector particles for an elastic, isotropic and homogeneous body subject to external forces   $\mathcal{F}$. 

\smallskip 

In Poisson \cite{Po1829}, Timoshenko \cite{Ti1953}, Hudson \cite{Hu1980}, among others, it has been shown that equation \eqref{ex} provides information about different {\it body waves}. 
In a scalar sense  ($P$-waves), where the notation ${\rm div}u$ stands for fractions of volume changes from the strain tensor, it   explains the behavior of compression and rarefaction in the interior of the body. From the mathematical point of view, it can be given by the identity
$$
\partial^2_{t}({\rm div}u)-\alpha^2\Delta ({\rm div}u)={\rm div}\mathcal{F},
$$
where  $\alpha=\sqrt{\frac{\lambda+2\mu}{\rho}}$ represents  speeds of wave propagation.

\smallskip 

On the other hand, by considering the case   $\nabla \times u$, one obtains the behavior of vector waves ($S$-waves) that model  small rotations of lineal elements from shear forces acting within the body. In this way, the following equation arises
$$
\partial^2_{t} (\nabla \times u)-\beta^2\Delta (\nabla \times u)=\nabla \times\mathcal{F},
$$
where $\beta=\sqrt{\frac{\mu}{\rho}}$ means the speeds of $S$-wave propagation.

\smallskip 

The analysis of the dynamics for  \eqref {ex} has shown great applications in the effect of seismic waves on various materials
(e.g. harzburgite, garnet, pyroxenite, amphibolites, granite, gas sands, quartz, etc), where the propagation of the $P$-waves represents the change of volume in the interior of the body under compression and dilatation in the wave direction, see Figure   \ref{fig-w}(b), whereas the $S$-waves are cross displacements that produce vibrations in a perpendicular direction (normal to the traveling wave), see Figure \ref{fig-w}(c).

\begin{figure}[htb] 
	\begin{center}
		\includegraphics[scale=0.45]{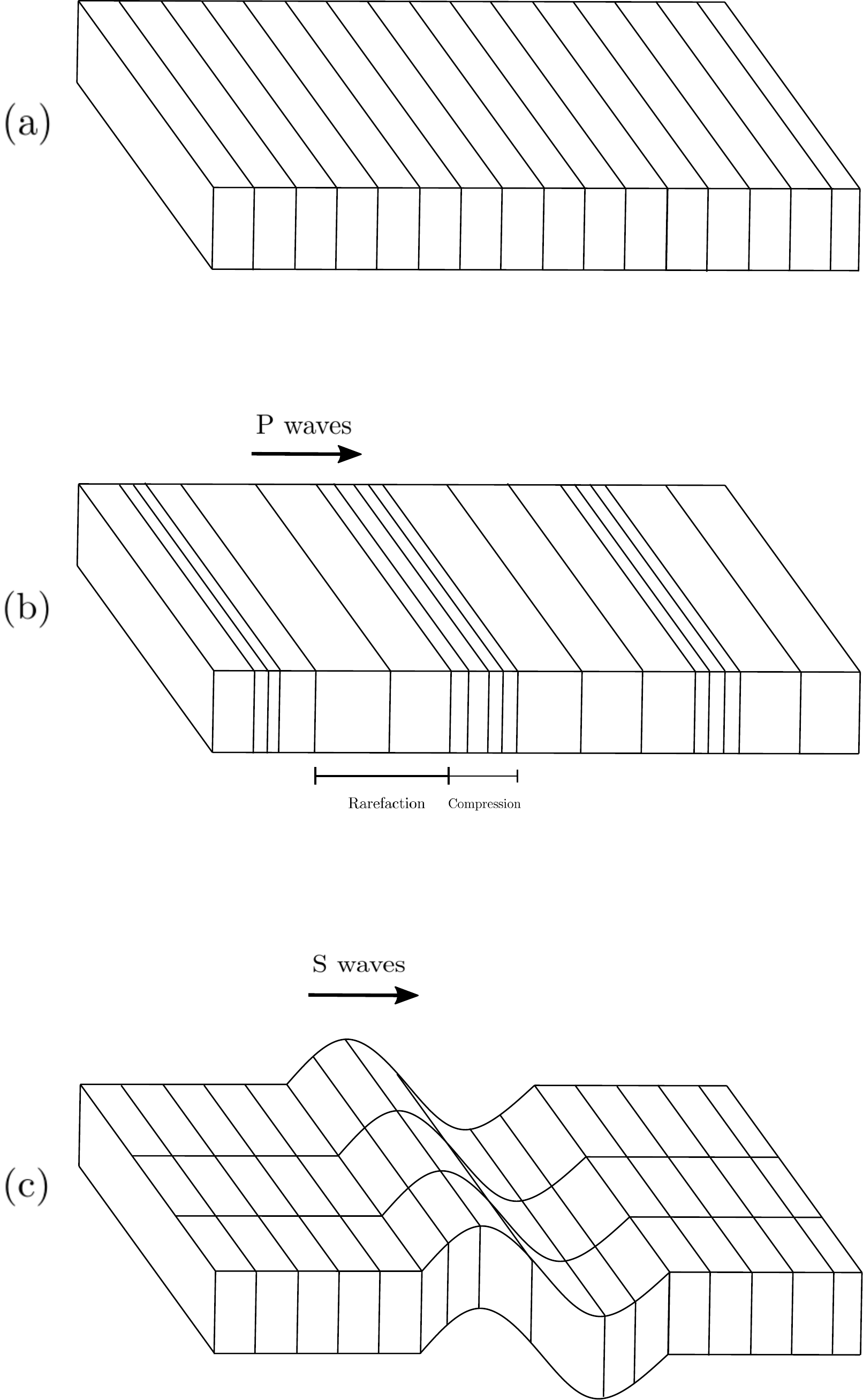} 
	\end{center}
	\caption{\small In (a) we have the elastic body in a rest position.  In (b) we have the effect of $P$-waves propagation on the material, where small contraction and dilation are produced in the same direction of the wave propagation. In (c) we exemplify the effect of transversal $S$-waves on the material, which are generated from the shear forces and are effective in normal directions with respect to the direction of the wave propagation.}
	\label{fig-w}
\end{figure}

A general existing scenario is when earthquakes generate shear waves, say $S$-waves, that are more effective than compression waves, say $P$-waves, and therefore the most damage on the body displacements is  due to the ``stronger'' vibrations caused by $S$-waves.
On the other hand, $P$-waves commonly  
propagate at a higher speed in relation to $S$-waves, reaching   their highest speed, namely, the highest value for $  \beta $, near the basis of the body. Thus, from this viewpoint, it is worth mentioning that the approximation  $\lambda \to -\mu$ symbolizes the approaching of the velocities with respect to $S$-waves in relation to $P$-waves.

\smallskip 

For instance, when one considers the approach of $\lambda$ to $-\mu$  on sedimentary rocks, one has atypical cases  concerning bulk modulus or 
Poisson's ratio. This is the case when one considers e.g.
$\lambda<-\frac{2\mu}{3}$ which is the case where we have negative bulk modulus
or when $\lambda\sim-\mu$ which is the case where the Poisson's ratio is not defined, being $\pm \infty$  in the left or right approximation, respectively. These results seem to contradict the physical notion  that we have  regarding the study of thermodynamics on this type of materials, but several studies show that the compressibility of the material is  closely related to the constant $\lambda$ instead of approximations coming from the    bulk modulus or the Poisson's ratio, see e.g.  Goodway  \cite{AVO}.

\smallskip 

Other examples of such approximations are considered as follows. Indeed, in Moore et al. \cite{Moore} the authors reveal the possibility of considering negative incremental bulk modulus on  open cell foams
on porous media. Also, Lakes and Wojciechowski \cite{LW} show the possibility of taking  negative Poisson's ratio and bulk modulus for the same type of materials, which proves its   structural stability. These are examples that show  us the existence of materials   (e.g., gas sands \cite{AVO} and open cell foams \cite{Moore,LW}) that, under certain circumstances,  allow us to consider the limit situation of $\lambda$ to negative values. Thus, it makes sense to consider for example   $\lambda \to -\mu$.

\smallskip 

Moreover, in Ji et al. \cite{Ji} 
the authors show that for  quartz materials under a confining pressure of $600$ MPa and a temperature around  $650 \, ^0$C, the transmission between High--Low Quartz demonstrates a significant decreasing in the speed of $P$-wave propagation  $\left(\alpha=\sqrt{\frac{\lambda+2\mu}{\rho}}\right)$  in relation to the perturbation of the speed of $S$-wave propagation $\left(\beta=\sqrt{\frac{\mu}{\rho}}\right)$. Therefore, to  consider  the approximation
$$
\frac{\alpha}{\beta}   \to 1 \quad \mbox{ wich means } \quad \lambda\to-\mu 
$$
in the dynamic of seismic waves, it is equivalent to study the state of transition between   High--Low Quartz in materials (say rocks)   containing quartz (as for example granite, diorite, and felsic gneiss)
and its behavior with respect to the wave speeds of propagation for  transverse and compressible waves in the material, under  proper conditions of temperature and pressure.

\subsection{Assumptions}

The following assumptions shall be considered throughout this paper for the functions defined on a bounded domain $\Omega \subset \mathbb{R}^3$ 
 with smooth boundary $\partial\Omega$.

\begin{enumerate}
	\item[(A1)]  The damping coefficient $\alpha$ and the Lam\'e coefficients $\lambda, \mu$ fulfill
\begin{equation} \label{2}
\alpha, \mu>0 \ \ \mbox{ and } \ \ \lambda\in\mathbb{R} \ \ \mbox{with} \ \ \mu +\lambda \geq 0. 
\end{equation}
	
	\item[(A2)] The external vector force $b$ satisfies
	\begin{equation}\label{hip-h}
	b \in (L^2(\Omega))^3.
	\end{equation}

	\item[(A3)]  The nonlinear vector field  $f=(f_1,f_2,f_3)$ is assumed to satisfy: there exist a vector field $g=(g_1,g_2,g_3) \in (C^1(\mathbb{R}^3))^3$, and functions $G \in C^2(\mathbb{R}^3)$ and $h_i \in C^2(\mathbb{R})$, $i=1,2,3$, such that 
	
$$
f_i(u_1,u_2,u_3)=g_i(u_1,u_2,u_3) + h_i(u_i), \quad i=1,2,3,
$$
$$
f_i(0)=g_i(0)=h_i(0)=0, \quad i=1,2,3,
$$
$$
g=(g_1, g_2, g_3)=\nabla G.
$$
In addition, there exist  constants $M, m_f \geq 0$  such that
	\begin{align}
	f(u) \cdot u -G(u) - \sum_{i=1}^3 \int_0^{u_i} h_i(s)ds \geq - M |u|^2 - m_f, \,\,\forall u \in \mathbb{R}^3, \label{9.1}\\
	G(u) +\sum_{i=1}^3 \int_0^{u_i} h_i(s)ds \geq -M |u|^2 -m_f, \,\,\forall u \in \mathbb{R}^3, \label{4.1}
	\end{align}
with 
\begin{equation}\label{4.2}
	0 \leq M < \frac{\mu \lambda_1}{2},
	\end{equation}
where $\lambda_1>0$ denotes the first eigenvalue of the Laplacian operator $-\Delta$. Moreover,
with respect to functions $g_i$ and $h_i$, $i=1,2,3$, we additionally assume: 
	\begin{itemize}
	
	\item $g$ fulfills the subcritical growth restriction: there exist $1 \leq p < 3 $ and $M_g >0$ such that, for $i=1,2,3$,
		\begin{equation}\label{4.3}
		|\nabla g_i(u)| \leq M_g (1+ |u_1|^{p-1}+|u_2|^{p-1}  + |u_3|^{p-1}), \ \ \forall \, u=(u_1,u_2,u_3) \in \mathbb{R}^3.
		\end{equation}
		
		\item For each $i=1,2,3$, $h_i$ fulfills the critical growth restriction: there exists a constant $c_h>0$ such that 
		\begin{align}\label{aa}
		|h'_i(x)| \leq  c_h(1+|x|^2),\quad  \forall \ x \in \mathbb{R}, \ i=1,2,3. 
		\end{align}
	\end{itemize}

\end{enumerate}

\subsection{Functional setting}
We denote the inner product in $L^2(\Omega)$ by $\left\langle u , v\right\rangle = \int_{\Omega} uv dx$ for $u,v \in L^2(\Omega)$. For the sake of simplicity, we use the same notation to the inner product in $(L^2(\Omega))^3$, that is, given $u=(u_1,u_2,u_3), v=(v_1,v_2,v_3) \in (L^2(\Omega))^3$,
\begin{align*}
\left\langle u , v\right\rangle := \sum_{i=1}^3 \left\langle u_i,v_i \right\rangle. 
\end{align*}
Similarly, $\left\langle \nabla \cdot, \nabla \cdot \right\rangle $ stands for the inner product in $H_0^1(\Omega)$ as well as the inner product in $(H_0^1(\Omega))^3$. Thus, given $u=(u_1,u_2,u_3), v=(v_1,v_2,v_3) \in (L^2(\Omega))^3$,
\begin{align*}
\left\langle \nabla u , \nabla v\right\rangle := \sum_{i=1}^3 \left\langle \nabla u_i, \nabla v_i \right\rangle. 
\end{align*}
In addition,  for $p>0$, we denote the norms in the spaces $L^p(\Omega)$ and $(L^p(\Omega))^3$  by $|\cdot|_p$ and $\|\cdot \|_p$, respectively, that is,
\begin{align*}
&|u|_p:=\left(\int_{\Omega} |u|^pdx \right)^{\frac{1}{p}}, \;\; u \in   L^p(\Omega),\\
&\|u\|_p^p:=\sum_{i=1}^3 |u_i|_p^p, \;\; u=(u_1,u_2,u_3) \in   (L^p(\Omega))^3 .
\end{align*}   
In  particular, for   $p=2$, one reads  
$$
\|u\|_2^2=\left\langle u, u\right\rangle \ \mbox{ for } \ u \in   (L^2(\Omega))^3   \ \ \mbox{and} \ \  
|u|_2^2=\left\langle u, u\right\rangle \ \mbox{ for } \ u \in   L^2(\Omega).
$$
The elasticity operator $\mathcal{E}$, with domain $D(\mathcal{E}):= (H^2(\Omega) \cap H_0^1(\Omega))^3 $, is given by  
\begin{equation}\label{operator}
 {\mathcal{E}} u=-\mu \Delta u - (\lambda +\mu)\nabla(\nabla \cdot u).
\end{equation}
We consider the Hilbert space $\left( (H_0^1(\Omega))^3, \left\langle \cdot, \cdot \right\rangle_{e} \right)$, where the inner product $\left\langle \cdot, \cdot \right\rangle_{e} $ is given by 
\[\left\langle v,w \right\rangle_{e} =  \mu \left\langle \nabla v,\nabla w \right\rangle+ (\lambda+\mu) \left\langle \text{div}u, \text{div}w \right\rangle. \]
\begin{remark}\label{lemma0.1} 
	Under the above notations, it is easy to verify that the norms $\| \cdot \|_{e}^2 :=\sqrt{\left\langle \cdot, \cdot \right\rangle_{e}}$ and $\| \nabla \cdot \|_2^2 := \sqrt{\left\langle \nabla \cdot, \nabla \cdot \right\rangle}$ 
	are equivalent in $(H_0^1(\Omega))^3$. More precisely, one has
	\begin{align*}
	\mu\|\nabla u \|_2^2 \leq \| u \|_{e}^2 \leq a_0 \| \nabla u \|_2^2, \ \ \forall \, u=(u_1,u_2,u_3)\in(H_0^1(\Omega))^3,
	\end{align*}	
	where $ a_0=\max\{\mu, 3(\lambda+\mu)\}$. 
\end{remark}

Additionally, if $u \in D(\mathcal{E}) $ and $v \in (H_0^1(\Omega))^3$, then it is easy to verify that 
\begin{align}\label{9.2}
\left\langle \mathcal{E} u  ,v \right\rangle = \left\langle u, v \right\rangle_{e}.
\end{align}
From \eqref{9.2}, Remark \ref{lemma0.1} and the compact embedding of $H_0^1(\Omega)\hookrightarrow L^2(\Omega)$, one sees  that $\mathcal{E}$ is a positive self-adjoint operator. We denote the fractional power associated to $\mathcal{E}$ by $\mathcal{E}^r$ with domain $X^r := D({\mathcal{E}}^r)$, which is  endowed with the natural inner product $\left\langle \cdot, \cdot \right\rangle_{r} := \left\langle {\mathcal{E}}^r \cdot, {\mathcal{E}}^r \cdot \right\rangle$. In particular,
\begin{align*}
X^0&=((L_2(\Omega))^3; \left\langle \cdot,  \cdot \right\rangle),\\
X^{1/2}&=\left( (H_0^1(\Omega))^3; \left\langle {\mathcal{E}}^{1/2} \cdot, {\mathcal{E}}^{1/2} \cdot \right\rangle \right),\\
X^1&=(D({\mathcal{E}}); \left\langle \mathcal{E} \cdot, \mathcal{E} \cdot \right\rangle).
\end{align*} 

\begin{remark}
	From Riesz's Theorem along with density arguments  and continuity, we have
$$
\left\langle u, v \right\rangle_{1/2}=\left\langle u, v \right\rangle_{e}, \quad \forall \ u, v \in (H_0^1(\Omega))^3.
$$	
\end{remark} 

Finally, we define the (Hilbert) weak  phase space $\mathcal{H}:= X^{1/2}\times X^0$ with the usual inner product and induced norm $\|\cdot \|_\mathcal{H}$; and  the (Hilbert) strong  phase space $
\mathcal{H}^1:=X^1 \times X^{1/2} .
$

\subsection{Well-posedness and energy estimates}

Under the above assumptions and notations, we are able to state the Hadamard well-posedness of   $(\ref{problem})$. 
We start by denoting
\begin{equation} \label{eq}
U=\left[\begin{array}{c}
u \\
\partial_t u
\end{array}\right], \ \  \mathbb{E}=\left[\begin{array}{cc}
0 & -I \\
\mathcal{E} & \alpha
\end{array}\right], \ \ \mathbb{F}=\left[\begin{array}{cc}
0 & 0 \\
f(\cdot) & 0
\end{array}\right], \ \ \mathbb{B}=\left[\begin{array}{c}
0 \\
b(x)
\end{array}\right].
\end{equation}
Then, problem (\ref{problem}) is equivalent to the Cauchy problem

\begin{equation}\label{abs-system}
\partial_t U+\mathbb{E}U+\mathbb{F}U=\mathbb{B}, \ \ U(0)=\left[\begin{array}{c}
u_0 \\
u_1
\end{array}\right],
\end{equation}
where $\mathbb{E}:D(\mathbb{E})\subset \mathcal{H}\to \mathcal{H}$ with domain
$$
D(\mathbb{E})=\{ (u,v) \in \mathcal{H} \ | \  \mathcal{E}u+\alpha v \in X^0, \,  v \in X^{1/2}\}=\mathcal{H}^1.
$$

\begin{theorem}[Well-posedness] \label{Theorem3.4} Let us assume that $(\ref{2})$-$(\ref{aa})$ hold. Then,
	\begin{itemize}
		\item[$(i)$] For $(u_0, u_1) \in \mathcal{H}$,  system $(\ref{abs-system})$ possesses a unique mild solution 
		\begin{align*}
		U \in C(\mathbb{R}^+;\mathcal{H}).
		\end{align*} 
		
		\item[$(ii)$] For $(u_0, u_1) \in \mathcal{H}^1$, system $(\ref{abs-system})$ possesses a unique regular solution 
		\begin{align*}
		U \in C(\mathbb{R}^+;\mathcal{H}^1).
		\end{align*}
		
		\item[$(iii)$] For any $T>0$ and any bounded set $B \subset \mathcal{H}$, there exists a constant $C_{BT}>0$ such that for any two solutions $z^i = (u^i,\partial_{t} u^i)$ of \eqref{abs-system} with initial data $z_0^i \in B$, $i=1,2$, we have  
		\begin{align*}\label{cc}
		\|z^1(t)- z^2(t)\|_{\mathcal{H}}^2 \leq C_{BT} \|z_0^1-z_0^2\|_{\mathcal{H}}^2. 
		\end{align*}		
	\end{itemize}
\end{theorem}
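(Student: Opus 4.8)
The plan is to treat $(\ref{abs-system})$ as a semilinear Cauchy problem and invoke the standard semigroup machinery: first show that $-\mathbb{E}$ generates a $C_0$-semigroup of contractions on $\mathcal{H}$, then verify that the perturbation $\mathbb{F}$ is locally Lipschitz, and finally upgrade local solutions to global ones through a priori energy bounds, closing with a Gronwall argument for the continuous dependence in item $(iii)$.

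For the linear part I would apply the Lumer--Phillips theorem, so I must check that $\mathbb{E}$ is m-accretive on $\mathcal{H}$ (equivalently, $-\mathbb{E}$ is m-dissipative). Accretivity is a direct computation: for $U=(u,v)\in D(\mathbb{E})=\mathcal{H}^1$ one has $\mathbb{E}U=(-v,\,\mathcal{E}u+\alpha v)$, whence, using $(\ref{9.2})$ and the identification $\langle\cdot,\cdot\rangle_{1/2}=\langle\cdot,\cdot\rangle_{e}$,
\[
\langle \mathbb{E}U,U\rangle_{\mathcal{H}}
= -\langle v,u\rangle_{1/2}+\langle u,v\rangle_{1/2}+\alpha\|v\|_2^2
= \alpha\|v\|_2^2\ge 0 .
\]
For maximality I would solve $(I+\mathbb{E})U=(\phi,\psi)$ for arbitrary $(\phi,\psi)\in\mathcal{H}$; eliminating $v=u-\phi$ reduces the system to $(\mathcal{E}+(\alpha+1)I)u=\psi+(\alpha+1)\phi$, which is uniquely solvable because $\mathcal{E}$ is positive self-adjoint, so $\mathcal{E}+(\alpha+1)I:X^1\to X^0$ is an isomorphism. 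This produces $(u,v)\in\mathcal{H}^1=D(\mathbb{E})$ and hence $\mathrm{Range}(I+\mathbb{E})=\mathcal{H}$; together with density of $D(\mathbb{E})$ in $\mathcal{H}$ this gives the desired generation.

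Next I would show that $\mathbb{F}(u,v)=(0,f(u))$ is locally Lipschitz from $\mathcal{H}$ into itself, which amounts to proving that $f:X^{1/2}\to X^0$, i.e. $(H^1_0(\Omega))^3\to(L^2(\Omega))^3$, is locally Lipschitz. Writing $f_i=g_i+h_i$, the subcritical term $g_i$ is handled readily from $(\ref{4.3})$ and the embedding $H^1_0(\Omega)\hookrightarrow L^6(\Omega)$ in dimension three. The main technical obstacle is the critical term $h_i$: from $(\ref{aa})$ one gets $|h_i(a)-h_i(b)|\le c_h(1+|a|^2+|b|^2)|a-b|$ for all $a,b\in\mathbb{R}$, and then Hölder's inequality (with exponents $3/2$ and $3$) combined with $H^1_0(\Omega)\hookrightarrow L^6(\Omega)$ yields
\[
\|f(u)-f(v)\|_{X^0}\le C\bigl(1+\|u\|_{X^{1/2}}^2+\|v\|_{X^{1/2}}^2\bigr)\,\|u-v\|_{X^{1/2}} .
\]
Since this estimate sits exactly at the critical Sobolev exponent, keeping the constant locally bounded is the delicate point of the whole argument. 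With $-\mathbb{E}$ a contraction generator and $\mathbb{F}$ locally Lipschitz, the classical theory of semilinear equations (Banach fixed point applied to the variation-of-constants formula) yields a unique local mild solution; and since $f\in C^1$, regular solutions for data $(u_0,u_1)\in D(\mathbb{E})=\mathcal{H}^1$ follow from the usual regularity theory.

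To obtain global existence I would derive the energy identity associated with $(\ref{problem})$ and use the structure hypotheses $(\ref{9.1})$--$(\ref{4.2})$: the lower bounds on the potential together with $0\le M<\mu\lambda_1/2$ and Poincaré's inequality allow one to control the $\mathcal{H}$-norm of the solution on any interval $[0,T]$, and this a priori bound rules out finite-time blow-up, extending the solution to all of $\mathbb{R}^+$. Finally, for item $(iii)$ I would subtract the two variation-of-constants formulas for $w=z^1-z^2$, use the local Lipschitz bound for $f$ on a ball containing both trajectories on $[0,T]$ (available from part $(i)$ and the bounded-set hypothesis), and close the estimate with Gronwall's inequality, which produces the constant $C_{BT}$.
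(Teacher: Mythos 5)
Your proposal is correct and follows exactly the route the paper indicates: maximal monotonicity of $\mathbb{E}$ (via Lumer--Phillips / the resolvent equation), local Lipschitz continuity of $\mathbb{F}$ on $\mathcal{H}$ from (A3) and the embedding $H_0^1(\Omega)\hookrightarrow L^6(\Omega)$, classical semilinear semigroup theory for items $(i)$--$(ii)$ with global extension from the energy bound, and a Gronwall argument on the difference of solutions for $(iii)$. The paper only sketches these steps, so your write-up simply supplies the details it omits.
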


\begin{proof}
It is easy to check that operator $\mathbb{E}$ set in \eqref{eq} is a maximal monotone operator and also, under assumption (A3),   $\mathbb{F}$ is a locally Lipschitz on $\mathcal{H}$. Therefore, applying the classical theory of linear semigroups, see e.g. \cite{arrieta,hale,pazy}, items $(i)$-$(ii)$ are concluded. The continuous dependence $(iii)$ is also obtained by using standard computations in the difference of solutions.
\end{proof}


In what follows we give some useful inequalities involving the energy functional. 
The total energy functional associated with problem (\ref{problem}) is given by 
\begin{equation}\label{16}
E(t)=\frac{1}{2}\| (u, \partial_t u)\|_{\mathcal{H}}^2 + \int_{\Omega}G(u)dx + \sum_{i=1}^3 \int_{\Omega} \int_0^{u_i} h_i(s)ds dx - \left\langle  b(x), u\right\rangle.
\end{equation}

\begin{proposition}
Under the hypotheses $(\ref{2})$-$(\ref{aa})$, we have:
\begin{itemize}
\item[$(i)$] the energy $E(t)$ is non-increasing with $E(t)\leq E(0)$ for all $t\geq0;$

\item[$(ii)$] there exist positive constants $K_1, K_2$ and $K_3$ such that 
	\begin{equation}\label{18}
	K_2  \| (u, \partial_t u)\|_{\mathcal{H}}^2  - K_3 \leq E(t)\leq  K_1  \| (u, \partial_t u)\|_{\mathcal{H}}^4 + K_3, \ \ \forall \, t\geq0.
	\end{equation}
\end{itemize}
\end{proposition}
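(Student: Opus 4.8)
The plan is to prove the two assertions in turn, both resting on the assumption hypotheses (A1)--(A3) and the equivalence of norms from Remark~\ref{lemma0.1}. For part $(i)$, I would differentiate the energy functional $E(t)$ defined in \eqref{16} along a regular solution of \eqref{problem} and then extend to mild solutions by density. Formally, multiplying the equation $\partial_t^2 u - \mu\Delta u - (\lambda+\mu)\nabla\,\mathrm{div}\,u + \alpha\partial_t u + f(u) = b$ by $\partial_t u$ in $(L^2(\Omega))^3$ and integrating over $\Omega$, the elastic terms combine via \eqref{9.2} into $\tfrac{d}{dt}\tfrac12\|u\|_e^2$, the nonlinearity pairs with $\partial_t u$ to give $\tfrac{d}{dt}\big(\int_\Omega G(u)\,dx + \sum_i \int_\Omega\int_0^{u_i} h_i(s)\,ds\,dx\big)$ using the structural decomposition $f_i = g_i + h_i$ with $g = \nabla G$ from (A3), and the forcing term produces $-\tfrac{d}{dt}\langle b,u\rangle$. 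What remains after collecting these exact derivatives is exactly
\begin{equation*}
\frac{d}{dt}E(t) = -\alpha\|\partial_t u\|_2^2 \le 0,
\end{equation*}
since $\alpha > 0$. This yields $E(t) \le E(0)$ immediately and gives part $(i)$.

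For part $(ii)$, the lower bound is the more delicate of the two. Starting from \eqref{16}, I would use the coercivity inequality \eqref{4.1} to bound the potential terms below by $-M\|u\|_2^2 - m_f|\Omega|$ (after integrating the pointwise bound over $\Omega$), and control the forcing term by Cauchy--Schwarz and Young's inequality, $|\langle b,u\rangle| \le \tfrac{\delta}{2}\|u\|_2^2 + \tfrac{1}{2\delta}\|b\|_2^2$. The key is to absorb the resulting $\|u\|_2^2$ terms into the kinetic/elastic part $\tfrac12\|(u,\partial_t u)\|_{\mathcal{H}}^2$. Using the Poincar\'e-type eigenvalue estimate $\|u\|_2^2 \le \lambda_1^{-1}\|\nabla u\|_2^2$ together with $\mu\|\nabla u\|_2^2 \le \|u\|_e^2$ from Remark~\ref{lemma0.1}, one gets $\|u\|_2^2 \le (\mu\lambda_1)^{-1}\|u\|_e^2$. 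The hypothesis \eqref{4.2}, namely $0 \le M < \mu\lambda_1/2$, is precisely what guarantees that $2M/(\mu\lambda_1) < 1$, so choosing $\delta$ small enough the coefficient in front of $\|u\|_e^2$ stays strictly positive, yielding a lower bound of the form $E(t) \ge K_2\|(u,\partial_t u)\|_{\mathcal H}^2 - K_3$ with $K_2 > 0$.

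The upper bound is more routine. Here I would estimate the potential terms from above using the growth restrictions \eqref{4.3} and \eqref{aa}: integrating $g = \nabla G$ and $h_i$ gives $G(u) \lesssim |u|^2 + |u|^{p+1}$ with $p < 3$ and $\int_0^{u_i} h_i \lesssim |u_i|^2 + |u_i|^4$, whence by the Sobolev embedding $H_0^1(\Omega) \hookrightarrow L^6(\Omega)$ (valid in dimension three, covering both exponents $p+1 \le 4$ and $4 \le 6$) these integrate to a bound controlled by $\|\nabla u\|_2^2 + \|\nabla u\|_2^4$, hence by $\|(u,\partial_t u)\|_{\mathcal H}^2 + \|(u,\partial_t u)\|_{\mathcal H}^4$. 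Combining with the forcing and the quadratic kinetic/elastic terms and enlarging constants gives $E(t) \le K_1\|(u,\partial_t u)\|_{\mathcal H}^4 + K_3$. The main obstacle is the lower bound: one must be careful that the absorption of the $\|u\|_2^2$ contributions is tight, and the sharp condition \eqref{4.2} must be invoked exactly, leaving no slack, so I would track the constants $M$, $\delta$, $\mu$, and $\lambda_1$ explicitly at that step rather than treating them loosely.
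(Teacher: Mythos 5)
Your proposal is correct and follows essentially the same route as the paper: part $(i)$ via the multiplier $\partial_t u$ yielding $E'(t)=-\alpha\|\partial_t u\|_2^2$, and part $(ii)$ by bounding the potential term $I$ below via \eqref{4.1}, Young's inequality and the Poincar\'e--eigenvalue estimate $\|u\|_2^2\le(\mu\lambda_1)^{-1}\|u\|_e^2$ (with \eqref{4.2} guaranteeing a positive coefficient), and above via the growth conditions \eqref{4.3}, \eqref{aa} and the embedding $H_0^1(\Omega)\hookrightarrow L^6(\Omega)$. The only cosmetic difference is your Young parameter $\delta$ versus the paper's $\epsilon$ (placed on the other factor), which is immaterial.
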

\begin{proof}
$(i)$ Taking the multiplier $u_t$ in problem \eqref{problem}, then a straightforward computation leads us to
	\begin{align} \label{17}
	E'(t) &= 
	-\alpha \|\partial_t u\|_2^2 \leq 0 , \quad  \forall  \ t >0,
	\end{align}
from where it readily follows the stated in item $(i)$. 	

\medskip 

\noindent $(ii)$ From conditions  $(\ref{2})$-$(\ref{aa})$ and Young's inequality with $\epsilon>0$,  the expression
	\begin{align*}
	I =\int_{\Omega}G(u)dx + \sum_{i=1}^3 \int_{\Omega} \int_0^{u_i} h_i(s)ds dx - \left\langle  b(x), u\right\rangle
	\end{align*}
	can be estimated from below and above as follows 
\begin{align*}
I & \geq \, - \ m_f |\Omega|- \frac{\epsilon}{4}\|b\|_2^2 - \left(\frac{M}{\lambda_1\mu} + \frac{1}{\lambda_1\mu \epsilon}\right) \|(u,\partial_t u)\|_{\mathcal{H}}^2,\\
I & \leq C_{f}|\Omega|+  \frac{1}{2}\|b\|_2^2 + \frac{C_{g}}{\mu^{\frac{p+1}{2}}}\| (u,\partial_t u)\|_{\mathcal{H}}^{p+1} \\
	& \ \ \ \ + \frac{C_{h}}{\mu^2}\| (u,\partial_t u)\|_{\mathcal{H}}^{4} +\frac{1}{2\sqrt{\lambda_1}\mu} \|(u, \partial_t u)\|_{\mathcal{H}}^2,
\end{align*} 
	where the positive generic constants depend on their index and some embedding with $H_0^1(\Omega)$, for example $C_{h}$ 
	depends on the constant $c_h$ in (\ref{aa}) and the compact embedding $H_0^1(\Omega) {\hookrightarrow} L^4(\Omega)$. From this and the definition of $E(t)$ in \eqref{16}, we infer 
	\begin{align*}
	E(t)  &\leq C_{f}|\Omega| +\frac{1}{2}\|b\|_2^2 + \frac{1}{2\sqrt{\lambda_1}\mu}+ \frac{C_{g}}{\mu^{\frac{p+1}{2}}}   \\
	& \ \ \  +\left(\frac{1}{2}+\frac{C_{g}}{\mu^{\frac{p+1}{2}}} + \frac{C_{h}}{\mu^2}\right) \|(u,\partial_t u)\|_{\mathcal{H}}^4,\\
	E(t) &\geq -m_f |\Omega|- \frac{\epsilon}{4}\|b\|_2^2 + \left(\frac{1}{2} - \frac{M}{\lambda_1\mu} - \frac{1}{\lambda_1 \mu \epsilon}\right) \|(u,\partial_t u)\|_{\mathcal{H}}^2.
	\end{align*}
	Therefore, from a proper choice of $\epsilon>0$ and using condition (\ref{4.2}), one can conclude the  existence of  positive constants $K_1,K_2$ and $K_3$ satisfying \eqref{18}.
\end{proof}

\begin{remark}\label{remark1}
We emphasizes  that above constants $K_1, K_2$ and $K_3$ in $(\ref{18})$ do not depend on the parameter $\lambda$.
\end{remark}

\section{Long-time dynamics}\label{sec-long-time-dyn}
\setcounter{equation}{0}

From Theorem \ref{Theorem3.4}, one can define a dynamical system  $(\mathcal{H}, S(t))$ associated with problem $(\ref{problem})$, where the  evolution operator  $S(t)$ corresponds to a non-linear $C_0$-semigroup (locally Lipschitz) on $\mathcal{H}$.

 Our main goal in this section is to prove that $(\mathcal{H}, S(t))$ possesses a finite dimensional global attractor $\mathcal{A}$ as well as to reach its qualitative properties such as characterization and regularity. To this end, we first recall some concepts in the theory of dynamical systems, by following e.g. the references \cite{chueshov-book,Von}.

\subsection{Some elements of dynamical systems} 

For the sake of completeness, we recall some basic facts on dynamical systems.

\begin{itemize}
\item A {\it global attractor} for a dynamical system $(\mathcal{H}, S(t))$ is a compact set $\mathcal{A} \subset \mathcal{H}$ which is fully invariant and uniformly attracting, it means, 	for any bounded subset $B \subset \mathcal{H}$
	\[ S(t)\mathcal{A}=\mathcal{A} \text{ and }  \lim_{t\to \infty } d_\mathcal{H} (S(t)B,\mathcal{A}) =0.  \] 
	\item The {\it fractal dimension} of a compact set $B \subset \mathcal{H}$ is defined  as
$$ \dim_{f} B = \limsup_{\epsilon \to 0} \frac{ln N_\epsilon (B)}{ln(1/\epsilon)},$$  
	where $N_\epsilon (B)$ is the minimal number of closed balls of radius $2 \epsilon$ necessary to cover $B$.
	
	\item The set of {\it stationary points} $\mathcal{N}$ of a dynamical system $(\mathcal{H}, S(t))$ is defined as
	\begin{align*}
	\mathcal{N} = \left\{ V \in \mathcal{H} \ |\ S(t)V=V, \ \ \forall \, t >0 \right\}.
	\end{align*}
	
	\item A dynamical system $(\mathcal{H},S(t))$ is called {\it gradient} if there exists a strict Lyapunov functional $\Psi$, that is, for any $z \in \mathcal{H}$, $\Psi(S(t)z)$ is decreasing with respect $t \geq 0$ and $\Psi$ is constant on the set of stationary points $\mathcal{N}$.
	
	\item Given a set $B \subset \mathcal{H}$, its {\it unstable manifold}  $W^u(B)$ is the set of points $z \in \mathcal{H}$ that belongs to some complete trajectory $\{y(t) \}_{t \in \mathbb{R}}$ and satisfies 
	\[  y(0) =z  \text{ and }  \limsup_{t \to -\infty} \text{dist}(y(t),B)=0. \]   
	 	
	\item {\it Quasi-stability.} Let $X,Y$ be reflexive Banach spaces with compact embedding $X \overset{c}{\hookrightarrow} Y$ and $\mathcal{H}=X \times Y$. Let us suppose  $(\mathcal{H},S(t))$ is given by 
	\[  S(t)z =(u(t), \partial_t u(t)) , \,\, z=(u_0,u_1) \in \mathcal{H}, \]
	where 
	\[ u \in C(\mathbb{R}^{+} ; X ) \cap C^1 (\mathbb{R}^{+} ; Y),    \]
	Then, $(\mathcal{H},S(t))$ is called {\it quasi-stable} on a set $B \subset \mathcal{H}$ if there exists a compact semi-norm $\eta_X$ on $X$ and non-negative scalar functions $a_1(t)$ and $a_3(t)$ locally bounded in $\mathbb{R}^{+}$ and $a_2(t) \in L^1(\mathbb{R}^{+})$ with $\lim_{t \to \infty} a_2(t)=0$ such that
	\begin{align*}
	\|S(t)z^1 -S(t)z^2\|_\mathcal{H}^2 \leq a_1(t)\|z^1 -z^2\|_\mathcal{H}^2,
	\end{align*}    
	and
	\begin{align*}
	\|S(t)z^1 -S(t)z^2\|_\mathcal{H}^2 \leq a_2(t)\|z^1 -z^2\|_\mathcal{H}^2 + a_3(t)\sup_{0 \leq s \leq t} \left[ \eta_X (u^1(s)  -u^2(s) ) \right]^2, 
	\end{align*}
	for any $z^1, z^2 \in B$. 
\end{itemize}

\begin{proposition}[{\cite[Corollary 7.5.7]{Von}}]\label{corollary}
Let $(\mathcal{H},S(t))$ be a gradient asymptotically smooth dynamical system. Additionally, if its Lyapunov function $\Psi(x)$ is bounded from above on any
	bounded subset of $\mathcal{H}$,  the set $\Psi_R = \left\{x \in \mathcal{H} : \Psi(x) \leq R\right\}$ is bounded for every $R$ and  the set $\mathcal{N}$ of stationary points of $(\mathcal{H},S(t))$ is bounded, then $(\mathcal{H},S(t))$ possesses a compact global attractor characterized by $ \mathcal{A}=W^u(\mathcal{N})$.
\end{proposition}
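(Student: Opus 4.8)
The plan is to split the statement into two essentially independent tasks: first, to prove that a compact global attractor $\mathcal{A}$ exists, and second, to establish the characterization $\mathcal{A}=W^u(\mathcal{N})$. For existence I would invoke the classical abstract criterion that an asymptotically smooth dynamical system admitting a bounded closed attracting set possesses a compact global attractor (the abstract theory behind \cite{chueshov-book,Von}). Since asymptotic smoothness is assumed, the entire burden of the first task reduces to producing such a bounded attracting set out of the gradient structure, and the second task is then a LaSalle-type invariance argument exploiting the strict Lyapunov functional $\Psi$.

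First I would record that the three hypotheses force orbits of bounded sets to be bounded. Because $\Psi$ is a strict Lyapunov functional, $t\mapsto\Psi(S(t)z)$ is non-increasing, hence $S(t)z\in\Psi_{\Psi(z)}$ for all $t\ge 0$. Given a bounded set $B$, the quantity $R_B:=\sup_{z\in B}\Psi(z)$ is finite since $\Psi$ is bounded above on bounded sets, so $\bigcup_{t\ge 0}S(t)B\subset\Psi_{R_B}$, and the sublevel set $\Psi_{R_B}$ is bounded by hypothesis. Thus $S(t)$ carries each bounded set into a fixed bounded set, uniformly in $t$. Next I would run the LaSalle argument: for bounded $B$, asymptotic smoothness together with boundedness of the positive orbit $\gamma^+(B)=\bigcup_{t\ge 0}S(t)B$ yields a nonempty compact invariant $\omega$-limit set $\omega(B)$ attracting $B$. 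On $\omega(B)$ the functional $\Psi$ equals the constant $\lim_{t\to\infty}\Psi(S(t)z)$; since $\Psi$ is \emph{strict}, any complete trajectory inside $\omega(B)$ along which $\Psi$ is constant must reduce to an equilibrium, so $\omega(B)\subset\mathcal{N}$. As $\mathcal{N}$ is bounded, a bounded closed neighbourhood of $\mathcal{N}$ is an attracting set, and the abstract existence criterion delivers the compact global attractor $\mathcal{A}$.

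For the identity $\mathcal{A}=W^u(\mathcal{N})$ I would prove both inclusions. For $W^u(\mathcal{N})\subset\mathcal{A}$, a point of $W^u(\mathcal{N})$ lies on a complete trajectory whose $\alpha$-limit set is contained in $\mathcal{N}$; monotonicity of $\Psi$ backward in time combined with boundedness of $\mathcal{N}$ confines this trajectory to a bounded sublevel set of $\Psi$, so it is a bounded complete trajectory and therefore belongs to $\mathcal{A}$. For the reverse inclusion, each $z\in\mathcal{A}$ sits on a bounded complete trajectory $y(\cdot)\subset\mathcal{A}$, and the same strict-Lyapunov argument applied to the $\alpha$-limit set gives $\alpha(y)\subset\mathcal{N}$, that is $\limsup_{t\to-\infty}\mathrm{dist}(y(t),\mathcal{N})=0$, whence $z\in W^u(\mathcal{N})$.

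I expect the delicate point to be the LaSalle invariance step: one must use continuity of $\Psi$ and of $S(t)$ to verify that $\Psi$ is genuinely constant on the limit sets, and then invoke strictness to collapse that constancy to equilibria. The backward-in-time boundedness needed for the inclusion $W^u(\mathcal{N})\subset\mathcal{A}$ likewise hinges on monotonicity of $\Psi$ rather than on any a priori dissipativity estimate. The remaining ingredients — precompactness of bounded orbits and existence of the attracting $\omega(B)$ — are precisely what the hypothesis of asymptotic smoothness is designed to supply.
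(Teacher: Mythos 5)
The paper itself offers no proof of this proposition: it is imported verbatim from Chueshov--Lasiecka \cite[Corollary 7.5.7]{Von}, so the comparison is against the standard reference argument, which your proposal largely reproduces (bounded orbits via sublevel sets, a LaSalle step to locate limit sets in $\mathcal{N}$, the abstract existence criterion for asymptotically smooth systems, and the two inclusions for $\mathcal{A}=W^u(\mathcal{N})$). The treatment of $W^u(\mathcal{N})\subset\mathcal{A}$ via backward boundedness and of $\mathcal{A}\subset W^u(\mathcal{N})$ via the $\alpha$-limit set is correct and is exactly how the characterization is obtained in the reference.

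There is, however, one genuinely false step in your existence argument: the claim that $\Psi$ is constant on $\omega(B)$ and hence $\omega(B)\subset\mathcal{N}$ for an arbitrary bounded set $B$. Constancy of $\Psi$ on the $\omega$-limit set holds for the orbit of a single point $z$ (where $\Psi$ restricted to $\omega(z)$ equals $\lim_{t\to\infty}\Psi(S(t)z)$), but not for the $\omega$-limit set of a set: $\omega(B)$ is invariant and in general contains heteroclinic connections along which $\Psi$ strictly decreases. Indeed, once the attractor exists, $\omega(B)$ for an absorbing set $B$ is all of $\mathcal{A}=W^u(\mathcal{N})$, which properly contains $\mathcal{N}$ whenever connecting orbits are present; so the inclusion $\omega(B)\subset\mathcal{N}$ cannot be right. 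The repair is standard and preserves your architecture: run the LaSalle argument pointwise to get $\omega(z)\subset\mathcal{N}$ for every $z\in\mathcal{H}$, which (since $\mathcal{N}$ is bounded) yields \emph{point} dissipativity, and then invoke the criterion that an asymptotically smooth, point dissipative system with bounded orbits of bounded sets has a compact global attractor. A secondary caveat: your collapse of constant-$\Psi$ trajectories to equilibria uses the standard notion of a \emph{strict} Lyapunov function (constancy of $t\mapsto\Psi(S(t)z)$ forces $z\in\mathcal{N}$), together with continuity of $\Psi$; the paper's own definition of ``strict'' only records that $\Psi$ is constant on $\mathcal{N}$, so you are implicitly supplying the missing (and correct) half of that definition.
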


\begin{proposition}[{\cite[Proposition 7.9.4]{Von}}]\label{corollary1}
Let us assume that the dynamical system $(\mathcal{H},S(t))$  is quasi-stable on every bounded forward invariant set $B\subset \mathcal{H}.$ Then,   $(\mathcal{H},S(t))$
	is asymptotically smooth.
\end{proposition}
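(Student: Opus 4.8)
The plan is to deduce asymptotic smoothness from the quasi-stability hypothesis through the abstract criterion based on \emph{contractive functions}, which is the natural tool in the framework of \cite{chueshov-book,Von} from which the statement itself is quoted. Recall that a functional $\Psi$ on $B \times B$ is called contractive if every sequence $\{z_n\} \subset B$ admits a subsequence $\{z_{n_k}\}$ with $\lim_{k}\lim_{l}\Psi(z_{n_k}, z_{n_l}) = 0$, and that the criterion guarantees asymptotic smoothness of $(\mathcal{H},S(t))$ as soon as, for each bounded forward invariant $B$, one can find a time $T>0$, a constant $0 \le \eta < 1$, and a contractive function $\Psi_{T,B}$ such that $\|S(T)z^1 - S(T)z^2\|_\mathcal{H}^2 \le \eta^2 \|z^1 - z^2\|_\mathcal{H}^2 + \Psi_{T,B}(z^1, z^2)$ for all $z^1, z^2 \in B$. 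Thus the entire argument reduces to extracting such a decomposition from the quasi-stability estimate.

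First I would fix an arbitrary bounded forward invariant set $B$ and write down the second quasi-stability inequality on $B$. Since $a_2 \in L^1(\mathbb{R}^+)$ with $a_2(t) \to 0$, I can choose $T>0$ large enough that $a_2(T) < 1$; setting $\eta := \sqrt{a_2(T)} < 1$ and
$$\Psi_{T,B}(z^1, z^2) := a_3(T)\sup_{0\le s\le T}\left[\eta_X\!\left(u^1(s) - u^2(s)\right)\right]^2,$$
the quasi-stability estimate becomes precisely the decomposition demanded by the criterion. It then only remains to verify that this $\Psi_{T,B}$ is indeed a contractive function on $B \times B$.

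This verification is the core of the proof and the step I expect to be the main obstacle. Given a sequence $\{z_n\} \subset B$, let $u^n$ denote the first component of $S(\cdot)z_n$. By forward invariance the trajectories $u^n$ are uniformly bounded in $C([0,T]; X)$, and from the regularity $u \in C^1(\mathbb{R}^+;Y)$ built into the quasi-stability setting they are equicontinuous as maps $[0,T] \to Y$. Since $\eta_X$ is a compact seminorm on $X$, each fiber $\{u^n(s)\}_n$ is $\eta_X$-precompact; combining this pointwise precompactness with the equicontinuity via an Arzel\`a--Ascoli/diagonal argument over a countable dense set of times yields a subsequence $\{u^{n_k}\}$ for which $\sup_{0 \le s \le T}\eta_X(u^{n_k}(s) - u^{n_l}(s)) \to 0$ as $k,l \to \infty$. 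Hence $\lim_k \lim_l \Psi_{T,B}(z_{n_k}, z_{n_l}) = 0$, so $\Psi_{T,B}$ is contractive, and the abstract criterion then delivers the asymptotic smoothness of $(\mathcal{H},S(t))$. The delicate point throughout is upgrading the pointwise-in-time $\eta_X$-precompactness to uniformity in $s \in [0,T]$, which is exactly where the compactness of the seminorm and the time-regularity of the trajectories have to be used together.
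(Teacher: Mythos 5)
The paper gives no proof of this statement at all: it is quoted verbatim from \cite[Proposition 7.9.4]{Von} and used as a black box, so there is nothing in the paper to compare against. Your argument is essentially the standard proof of that result from the cited source --- reduce to the contractive-function criterion (\cite[Theorem 7.1.11]{Von}) by choosing $T$ with $a_2(T)<1$ and taking $\Psi_{T,B}$ to be the compact tail of the second quasi-stability inequality --- so you have reconstructed the reference's route rather than found a new one. One caveat on the step you yourself flag as delicate: the equicontinuity of $s\mapsto u^n(s)$ that you get from $u\in C^1(\mathbb{R}^+;Y)$ is equicontinuity in $Y$, whereas the Arzel\`a--Ascoli argument needs equicontinuity with respect to the seminorm $\eta_X$; fiberwise compactness of $\eta_X$ alone does not bridge this, and one must additionally control $\eta_X$ by the $Y$-norm on $X$-bounded sets (in the present application this holds by interpolation, since $\eta_X(\cdot)=\|\cdot\|_{p_0}$ with $p_0<6$ and $X=(H_0^1(\Omega))^3$, $Y=(L^2(\Omega))^3$).
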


\begin{proposition}[{\cite[Theorem 7.9.6]{Von}}]\label{corollary2}
	Let  $(\mathcal{H},S(t))$ a quasi-stable dynamical system. If  $(\mathcal{H},S(t))$ possesses a 
compact global attractor $\mathcal{A}$ and is quasi-stable on $\mathcal{A}$, hen the attractor $\mathcal{A}$  has a finite fractal dimension $\dim_{f} \mathcal{A}<\infty.$

\end{proposition}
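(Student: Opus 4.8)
The plan is to deduce the statement from the abstract covering criterion underlying quasi-stability: a bounded, fully invariant set on which the flow is a \emph{contraction up to a compact seminorm} has finite fractal dimension. The first step is to \emph{discretize in time}. Since the scalar function $a_2$ appearing in the quasi-stability estimate satisfies $a_2(t)\to 0$, I would fix $T>0$ large enough that $\sqrt{a_2(T)}=:\eta<\tfrac12$ and set $V:=S(T)$. Because $\mathcal{A}$ is fully invariant, $V\mathcal{A}=\mathcal{A}$. Writing $S(s)z^i=(u^i(s),\partial_t u^i(s))$ and applying the quasi-stability inequality on $\mathcal{A}$ at time $T$ together with $\sqrt{a+b}\le\sqrt a+\sqrt b$, one obtains, for all $z^1,z^2\in\mathcal{A}$,
\[
\|Vz^1-Vz^2\|_{\mathcal{H}}\le \eta\,\|z^1-z^2\|_{\mathcal{H}}+n(z^1,z^2),\qquad
n(z^1,z^2):=\sqrt{a_3(T)}\,\sup_{0\le s\le T}\eta_X\!\big(u^1(s)-u^2(s)\big).
\]
This recasts the estimate in the form ``contraction with factor $\eta<1$ plus a lower-order correction $n$''.

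Second, I would show that the correction $n$ is governed by a genuine \emph{compact seminorm} $\nu$ on $\mathcal{H}$, in the sense that $n(z^1,z^2)\le \nu(z^1-z^2)$; this is the crux of the argument. By the Lipschitz dependence in Theorem~\ref{Theorem3.4}$(iii)$, the solution map $z\mapsto(u(s))_{0\le s\le T}$ sends bounded subsets of $\mathcal{H}$ into bounded subsets of $C([0,T];X^{1/2})$; coupling this with the compactness of the seminorm $\eta_X$ on $X=X^{1/2}$ (which rests on the compact embedding $X^{1/2}\hookrightarrow X^0$), any $\mathcal{H}$-bounded sequence of differences $z^1_k-z^2_k$ admits a subsequence along which $n(z^1_k,z^2_k)$ is Cauchy. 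Hence $n$ is dominated by a compact seminorm $\nu$ on $\mathcal{H}$ and the estimate above falls exactly under the abstract hypothesis. Reconciling the fact that $n$ a priori depends on \emph{both} trajectories with the requirement that it behave like a seminorm of the single difference $z^1-z^2$ is the step I expect to be the main obstacle; it relies essentially on the continuity of the flow and the compactness built into $\eta_X$.

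Finally, I would run the standard entropy (covering) argument. Choose $q\in(\eta,1-\eta)$ and set $\theta:=q+\eta<1$. Since $\nu$ is compact, the unit ball of $\mathcal{H}$ is covered by a finite number $m_0$ (independent of scale, by homogeneity) of $\nu$-balls of radius $(q-\eta)/2$. Given a cover of $\mathcal{A}$ by $N_R(\mathcal{A})$ balls of radius $R$ centered in $\mathcal{A}$, I subdivide the content of each ball using these $m_0$ scaled $\nu$-cells of $\nu$-radius $(q-\eta)R/2$; for two points $v^1,v^2$ in the same cell one has $\|v^1-v^2\|_{\mathcal{H}}\le 2R$ and $\nu(v^1-v^2)\le (q-\eta)R$, whence $\|Vv^1-Vv^2\|_{\mathcal{H}}\le 2\eta R+(q-\eta)R=\theta R$. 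Thus each of the $m_0 N_R(\mathcal{A})$ image cells has $\mathcal{H}$-diameter at most $\theta R$, and since $V\mathcal{A}=\mathcal{A}$ this yields the recursion $N_{\theta R}(\mathcal{A})\le m_0\,N_R(\mathcal{A})$. Iterating from $R_0=\operatorname{diam}\mathcal{A}$ gives $N_{\theta^k R_0}(\mathcal{A})\le m_0^{\,k}$, so that $\ln N_\epsilon(\mathcal{A})\le C\,\ln(1/\epsilon)$, and therefore $\dim_f\mathcal{A}\le \ln m_0/\ln(1/\theta)<\infty$, as claimed.
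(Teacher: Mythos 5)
The paper states this proposition without proof, citing \cite[Theorem 7.9.6]{Von}, so your attempt has to be measured against the argument in that reference. Your overall strategy (time discretization, contraction plus compact correction, entropy recursion) is the right skeleton, but there is a genuine gap exactly at the point you yourself flag as ``the main obstacle'', and the sentence with which you pass over it does not close it. The correction term produced by the quasi-stability inequality is $n(z^1,z^2)=\sqrt{a_3(T)}\,\sup_{0\le s\le T}\eta_X\big(u^1(s)-u^2(s)\big)$, which depends on the difference of the two \emph{trajectories}; since $S(t)$ is nonlinear, $u^1(s)-u^2(s)$ is not a function of $z^1-z^2$, and the claim ``hence $n$ is dominated by a compact seminorm $\nu$ on $\mathcal{H}$'' (i.e.\ $n(z^1,z^2)\le\nu(z^1-z^2)$) does not follow from the compactness argument you give. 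What that argument actually establishes is only that $n$ is a compact \emph{pseudometric} on $\mathcal{A}$ (every sequence of pairs admits a subsequence along which $n$ is Cauchy). That weaker property breaks your Step 3: the scale-independent multiplicity $m_0$ is obtained there ``by homogeneity'' of $\nu$, a pseudometric on a nonlinear set has no homogeneity, and without it the recursion $N_{\theta R}\le m_0 N_R$ with a fixed $m_0$ is unjustified (the number of $n$-cells needed could grow as $R\to 0$).

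The proof in \cite[Theorem 7.9.6]{Von} closes this gap by enlarging the phase space rather than by trying to majorize $n$ on $\mathcal{H}$. One works with the set $\mathcal{A}_T=\{(z,\,u|_{[0,T]}):z\in\mathcal{A}\}$ inside $\mathcal{H}\times W_T$, where $W_T$ carries the norm $\sup_{[0,T]}\|u(s)\|_X+\sup_{[0,T]}\|\partial_t u(s)\|_Y$, together with the shift map $\mathbf{V}\big(z,\,u|_{[0,T]}\big)=\big(S(T)z,\,u(T+\cdot)|_{[0,T]}\big)$. In that setting $\sup_{0\le s\le T}\eta_X\big(u^1(s)-u^2(s)\big)$ genuinely \emph{is} a seminorm of the difference of extended states; its compactness on $W_T$ follows from the compactness of $\eta_X$ combined with the equicontinuity supplied by the $\partial_t u$ bound (an Arzel\`a--Ascoli argument), and the first quasi-stability inequality (the locally bounded $a_1$) is what makes $\mathbf{V}$ Lipschitz on $\mathcal{A}_T$. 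One then runs the Ladyzhenskaya-type covering lemma \cite[Theorem 7.3.2]{Von} --- essentially your Step 3, which is fine once a true homogeneous compact seminorm is available --- in the extended space, and the finite fractal dimension of $\mathcal{A}_T$ projects down to $\mathcal{A}$. If you insert this extension (or an equivalent device), the remainder of your argument goes through.
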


\subsection{Main result and proofs}

We are now in condition to state and prove the main result concerning global attractors associated with problem $(\ref{problem})$. It reads as follows.

\begin{theorem}\label{3.16}
Under the assumptions $(\ref{2})$-$(\ref{aa})$, we have:	
\begin{itemize}
	\item[$(i)$] The dynamical system $(\mathcal{H}, S(t))$ corresponding to problem \eqref{problem} has a unique   global attractor $\mathcal{A}$ with finite fractal dimension $\dim_{f} \mathcal{A}<\infty$, and is  characterized by the unstable manifold $ \mathcal{A}=W^u(\mathcal{N})$ emanating from the set of stationary points $\mathcal{N}$ of $(\mathcal{H}, S(t))$.
	
		\item[$(ii)$]  Moreover, if  $h_i=0$, $i=1,2,3$, then $\mathcal{A}$ is bounded in the strong phase space $\mathcal{H}^1$. In particular, any full trajectory $\{(u(t),\partial_t u(t)), t \in \mathbb{R}\}$ that belongs to $\mathcal{A}$ has the following regularity properties
		\begin{equation}\label{reg-1}
		\partial_t u \in L^{\infty}(\mathbb{R};(H_0^1(\Omega))^3) \cap C(\mathbb{R};(L^2(\Omega))^3),\,\,\, \partial^2_{t} u \in L^{\infty}(\mathbb{R};(L^2(\Omega))^3),
		\end{equation}
		and there exists $R>0$ such that 
		\begin{equation}\label{reg-2}
		\|(\partial_{t} u(t),\partial^2_{t} u(t))\|_\mathcal{H}^2 \leq R^2,
		\end{equation}
		where $R$ does not depend on $\lambda$.
\end{itemize}	
\end{theorem}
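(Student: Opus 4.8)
The plan is to invoke the abstract machinery recalled in Propositions \ref{corollary}, \ref{corollary1}, and \ref{corollary2}, verifying their hypotheses for the system $(\mathcal{H}, S(t))$. The work splits into three blocks: (I) the gradient structure, (II) quasi-stability (which yields asymptotic smoothness and, together with the attractor, finite fractal dimension), and (III) the regularity statement in item $(ii)$.

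For block (I), I would take $\Psi = E$, the total energy functional in \eqref{16}. From the energy identity $E'(t) = -\alpha\|\partial_t u\|_2^2 \le 0$ established in \eqref{17}, $E$ is non-increasing along trajectories, and if $E(S(t)z)$ is constant in $t$ then $\partial_t u \equiv 0$, which forces $z$ to be a stationary point; hence $E$ is a strict Lyapunov functional. The lower bound in \eqref{18}, namely $K_2\|(u,\partial_t u)\|_{\mathcal{H}}^2 - K_3 \le E(t)$, shows that sublevel sets $\Psi_R$ are bounded, and the upper bound shows $\Psi$ is bounded on bounded sets. Boundedness of the stationary set $\mathcal{N}$ follows from the same lower bound applied to steady states together with \eqref{4.2}; here I would test the stationary equation $\mathcal{E}u + f(u) = b$ with $u$ and absorb the nonlinear term using \eqref{9.1} and the constraint $M < \mu\lambda_1/2$. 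This verifies every hypothesis of Proposition \ref{corollary} once asymptotic smoothness is in hand, giving the characterization $\mathcal{A} = W^u(\mathcal{N})$.

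Block (II) is where the main effort lies and is the principal obstacle. I would estimate the difference $w = u^1 - u^2$ of two trajectories, whose equation is $\partial_t^2 w + \mathcal{E}w + \alpha\partial_t w + f(u^1) - f(u^2) = 0$. Multiplying by $\partial_t w$ and by $w$ and combining to build a suitable perturbed energy, the goal is the quasi-stability inequality with a lower-order term controlled by a compact seminorm $\eta_X(w) = \|w\|_{L^{2+\delta}}$ or similar. The delicate point is handling the critical part $h_i$ satisfying only \eqref{aa}: the term $\langle h_i(u_i^1) - h_i(u_i^2), \partial_t w_i\rangle$ must be bounded using the critical Sobolev embedding $H_0^1 \hookrightarrow L^6$ together with interpolation, so that the critical growth is absorbed while the remaining factor sits in a compactly embedded norm; the subcritical $g$ part from \eqref{4.3} is routine by comparison. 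Once the second quasi-stability estimate holds on bounded forward-invariant sets, Proposition \ref{corollary1} gives asymptotic smoothness, and Proposition \ref{corollary2} (applied on $\mathcal{A}$) gives $\dim_f \mathcal{A} < \infty$.

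For block (III), with $h_i = 0$ the nonlinearity is only subcritical, and I would show the attractor lies in $\mathcal{H}^1$ by a bootstrap on trajectories in $\mathcal{A}$. Differentiating the equation in $t$ and setting $v = \partial_t u$, the linearized system $\partial_t^2 v + \mathcal{E}v + \alpha\partial_t v + f'(u)v = 0$ admits a uniform bound for $(\partial_t u, \partial_t^2 u)$ in $\mathcal{H}$ because the subcritical bound \eqref{4.3} makes $f'(u)v$ lower order; a Gronwall argument using the backward-in-time boundedness on the attractor yields \eqref{reg-2} with $R$ independent of $\lambda$ (Remark \ref{remark1} guarantees the energy constants $K_1,K_2,K_3$ are $\lambda$-free, and Remark \ref{lemma0.1} keeps the norm equivalence lower bound $\mu\|\nabla u\|_2^2$ independent of $\lambda$). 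The stated regularity \eqref{reg-1} then follows from elliptic regularity for $\mathcal{E}$ applied to the equation read as $\mathcal{E}u = b - f(u) - \alpha\partial_t u - \partial_t^2 u \in X^0$ uniformly in time.
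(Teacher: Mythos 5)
Your overall architecture coincides with the paper's: take $\Psi=E$, verify the gradient hypotheses of Proposition \ref{corollary} (boundedness of $\mathcal{N}$ via testing the stationary equation with $u$ and using \eqref{9.1}, \eqref{4.1}, \eqref{4.2}), establish quasi-stability to get asymptotic smoothness and finite fractal dimension via Propositions \ref{corollary1} and \ref{corollary2}. Blocks (I) is essentially identical to Lemma \ref{Lemma3.9}. However, block (II) contains a genuine gap at precisely the point you identify as delicate.

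Your proposed treatment of the critical terms $\langle h_i(u_i^1)-h_i(u_i^2),\partial_t w_i\rangle$ --- ``critical Sobolev embedding $H_0^1\hookrightarrow L^6$ together with interpolation, so that the critical growth is absorbed while the remaining factor sits in a compactly embedded norm'' --- does not work. By \eqref{aa} and H\"older with exponents $(3,6,2)$ the pointwise estimate gives
\begin{equation*}
|\langle h_i(u_i^1)-h_i(u_i^2),\partial_t w_i\rangle|\;\le\; C\bigl(1+\|u^1\|_6^2+\|u^2\|_6^2\bigr)\,\|w\|_6\,\|\partial_t w\|_2 ,
\end{equation*}
and $\|w\|_6$ is controlled only by the full $H_0^1$ norm: since $L^6$ is the endpoint of the Sobolev embedding in dimension three, no interpolation $\|w\|_6\le\|w\|_q^{\theta}\|w\|_{H^1}^{1-\theta}$ with $q<6$ and $\theta>0$ is available. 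The resulting term is of the same order as the energy of the difference, with a constant $C_B$ that is not small, so it can neither be absorbed by the damping nor relegated to a compact seminorm. This is exactly the obstruction that makes the critical case nontrivial, and the paper circumvents it by a different device (Step 6 of Proposition \ref{Theorem8}, following \cite[Lemma 4.9]{Ma}): the term is first multiplied by $e^{\gamma s}$ and integrated in time, then integrated by parts so that the time derivative is moved off $\partial_t w_i$; the resulting integrand involves $\|\partial_t u\|_2+\|\partial_t v\|_2$, which is square-integrable on $(0,\infty)$ by the dissipation identity \eqref{17}--\eqref{18}, and the estimate is then closed by Gronwall's inequality (estimate \eqref{est-J} and Step 7). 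Without this (or an equivalent) idea your quasi-stability inequality cannot be established, so block (II) as written would fail.

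Block (III) takes a genuinely different route from the paper: you propose the classical bootstrap of differentiating the equation in time and running Gronwall backward along full trajectories, whereas the paper simply observes that $h_i=0$ forces $I=J=0$ in the stabilizability estimate, so that the coefficient $a_3(t)$ can be taken bounded in $t$, and then quotes \cite[Theorem 7.9.8]{Von} to obtain \eqref{reg-1}--\eqref{reg-2} directly from quasi-stability. Your approach is workable in the subcritical case and makes the $\lambda$-independence of $R$ transparent through Remarks \ref{lemma0.1} and \ref{remark1}, but it requires justifying the formal time-differentiation; the paper's route avoids this entirely at the cost of invoking the abstract theorem.
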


The proof of Theorem \ref{3.16} will be concluded at the end of this section as a consequence of some technical results provided in the sequel.

\subsubsection{Gradient property}

\begin{lemma}\label{Lemma3.9}
Under the assumptions of Theorem $ \ref{3.16}, $ let us define the functional 
	\begin{equation*}
	\begin{array}{rcl}
	\Psi:  \, \mathcal{H} & \rightarrow & \mathbb{R}  \\
    z & \mapsto  & \Psi(z):=\Psi(u, v)    
	\end{array}
	\end{equation*}
given by	
\begin{equation}
\Psi(u, v)=\frac{1}{2}\| (u, v)\|_{\mathcal{H}}^2 + \int_{\Omega}G(u)dx + \sum_{i=1}^3 \int_{\Omega} \int_0^{u_i} h_i(s)ds dx - \left\langle  b(x), u\right\rangle.
\end{equation}	
Then:
\begin{enumerate}
	\item $\Psi$ is a strict Lyapunov functional;
	
	\item $\Psi(z) \rightarrow \infty $ if and only if $\|z\|_{\mathcal{H}} \rightarrow \infty$;
	
	\item  $\mathcal{N}$  is bounded on $\mathcal{H}$
\end{enumerate}

As a consequence, the dynamical system $(\mathcal{H}, S(t))$ associated with  problem \eqref{problem} is a gradient system.
\end{lemma}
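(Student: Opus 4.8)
The plan is to verify the three listed properties one at a time and then read off the gradient structure from the definition. The organizing observation is that along the trajectory $S(t)z=(u(t),\partial_t u(t))$ issuing from $z=(u_0,u_1)$ one has $\Psi(S(t)z)=E(t)$, with $E$ the total energy \eqref{16}; moreover $\Psi(u,v)$ has exactly the algebraic form of \eqref{16} with $\partial_t u$ replaced by $v$, so the two-sided estimate \eqref{18} transfers verbatim to $\Psi$ at an arbitrary $z=(u,v)\in\mathcal{H}$, namely
\begin{equation*}
K_2\|z\|_{\mathcal{H}}^2-K_3\leq \Psi(z)\leq K_1\|z\|_{\mathcal{H}}^4+K_3.
\end{equation*}
For \textbf{Property 1}, I would first use the dissipation identity \eqref{17}, which gives $\frac{d}{dt}\Psi(S(t)z)=E'(t)=-\alpha\|\partial_t u\|_2^2\leq0$, so $t\mapsto\Psi(S(t)z)$ is non-increasing for every $z$ and, in particular, constant in $t$ when $z\in\mathcal{N}$ since then $S(t)z=z$. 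For strictness I argue conversely: if $\Psi(S(t)z)=\Psi(z)$ for all $t>0$, then $E(t)\equiv E(0)$ forces $\int\alpha\|\partial_t u\|_2^2\,ds=0$, hence $\partial_t u\equiv0$; thus $u(t)\equiv u_0$ is time-independent and solves the stationary equation $\mathcal{E}u_0+f(u_0)=b$, so $z=(u_0,0)\in\mathcal{N}$. This is precisely the strict Lyapunov property.

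\textbf{Property 2} follows directly from the displayed two-sided bound. The lower bound $\Psi(z)\geq K_2\|z\|_{\mathcal{H}}^2-K_3$ is coercive, so $\|z\|_{\mathcal{H}}\to\infty$ forces $\Psi(z)\to\infty$; conversely the upper bound $\Psi(z)\leq K_1\|z\|_{\mathcal{H}}^4+K_3$ rules out $\Psi(z)\to\infty$ while $\|z\|_{\mathcal{H}}$ stays bounded. For \textbf{Property 3}, a stationary point is of the form $z=(u,0)$ with $u$ solving $\mathcal{E}u+f(u)=b$. Testing with $u$ and using \eqref{9.2} gives $\|u\|_e^2+\langle f(u),u\rangle=\langle b,u\rangle$. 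Adding \eqref{9.1} and \eqref{4.1} produces the pointwise bound $f(u)\cdot u\geq -2M|u|^2-2m_f$, hence $\langle f(u),u\rangle\geq -2M\|u\|_2^2-2m_f|\Omega|$. Combining this with Remark \ref{lemma0.1}, the Poincaré inequality $\|u\|_2^2\leq\lambda_1^{-1}\|\nabla u\|_2^2$, and Young's inequality applied to $\langle b,u\rangle$, I would arrive at
\begin{equation*}
\left(\mu-\frac{2M}{\lambda_1}-\frac{\epsilon}{2}\right)\|\nabla u\|_2^2\leq\frac{\|b\|_2^2}{2\epsilon\lambda_1}+2m_f|\Omega|.
\end{equation*}
By the spectral condition \eqref{4.2} one has $2M/\lambda_1<\mu$, so choosing $\epsilon$ small keeps the left coefficient strictly positive and bounds $\|\nabla u\|_2$ uniformly; since the velocity component vanishes, $\mathcal{N}$ is bounded in $\mathcal{H}$.

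Property 1 exhibits a strict Lyapunov functional for $(\mathcal{H},S(t))$, which is exactly the definition of a gradient system, so the final assertion follows. The step I expect to be the main obstacle is the rigorous justification of the dissipation identity \eqref{17} at the level of \emph{mild} solutions, since it is derived only formally for regular solutions, and it is this identity that underpins the strictness in Property 1; the cleanest route is to establish it first for regular data and then pass to the limit using density of $\mathcal{H}^1$ in $\mathcal{H}$ together with the continuous dependence of Theorem \ref{Theorem3.4}$(iii)$. The coercivity in Property 2 and the a priori bound in Property 3 are then routine consequences of the structural hypotheses (A3) and \eqref{4.2}.
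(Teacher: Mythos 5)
Your proposal is correct and follows essentially the same route as the paper: identify $\Psi$ along trajectories with the energy $E(t)$, use the dissipation identity \eqref{17} for monotonicity, the two-sided bound \eqref{18} for coercivity, and test the stationary equation $\mathcal{E}u+f(u)=b$ with $u$ together with \eqref{9.1}, \eqref{4.1}, \eqref{4.2} and Young's inequality to bound $\mathcal{N}$. You in fact go slightly further than the paper by proving the converse strictness direction ($\Psi$ constant along a trajectory implies stationarity) and by flagging the density/continuous-dependence argument needed to justify \eqref{17} for mild solutions, both of which the paper leaves implicit.
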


\begin{proof} Let fix $z_0 \in \mathcal{H}$ and recall that $\mathcal{N}$  is the set of stationary points of $(\mathcal{H}, S(t))$. Also, from \eqref{16} one sees that $\Psi(u(t),\partial_{t}u(t))=E(u(t),\partial_{t}u(t)):=E(t)$.  Then, we infer: 
	
\begin{itemize}
		\item From (\ref{17}), it is clear that $\Psi(S(t)z_0)$ is decreasing with respect to time and from (\ref{18}), $\Psi(z)=\Psi(S(0)z) \rightarrow \infty $ if and only if $||z||_{\mathcal{H}} \rightarrow \infty$.
		
\smallskip 
		
		\item 	 Let us consider the stationary problem:		
		\begin{eqnarray}\label{bb}
		\left\lbrace
		\begin{array}{lcr}
		\mathcal{E} u+ f(u) = b(x) & \text{in}& \Omega,\\
		u=0 & \text{on}&\partial \Omega.
		\end{array} 
		\right. 
			\end{eqnarray}
Thus, a simple computation shows that 	$\mathcal{N}$ is given by 		
$$
\mathcal{N}=\left\{(u,0) \in \mathcal{H} \ |\ u \text{ is the solution of } (\ref{bb}) \right\}.
$$ 
In addition, from (\ref{17}) it is easy to prove that  $\Psi$ is constant on $\mathcal{N}$. Finally,  multiplying (\ref{bb}) by $u$, integrating on $\Omega$ and using (\ref{9.1}) and (\ref{4.1}),  we obtain that for any $\epsilon >0$
		\begin{align}\label{eq23}
		\left( 1-\frac{2M}{\lambda_1\mu} -\frac{1}{4 \lambda_1 \mu \epsilon}\right) \|u\|_{e}^2 &\leq 2m_f|\Omega|+ \epsilon \|b\|_2^2, 
		\end{align}
	from where (along with (\ref{4.2})) we conclude that $\mathcal{N}$ is bounded on $\mathcal{H}$, for $\epsilon>0$ properly chosen.
	\end{itemize}
Therefore,  the items 1 - 3 are proved.
\end{proof}
%

\subsubsection{Quasi-stability property}


\begin{proposition}[Stabilizability Estimate]\label{Theorem8}
	Under the assumptions of Theorem $ \ref{3.16},$ let us consider  a bounded subset $B\subset \mathcal{H}$ and  two weak solutions   $\tilde{z}=(v,\partial_t v)$ and $ z=(u, \partial_t u)$ of problem  $(\ref{problem})$  with initial data $\tilde{z}(0)=(v_0,v_1)$, $z(0)=(u_0,u_1)\in B$. Then, 
	\begin{equation}\label{est-est}
	\|\tilde{z}(t)-z(t)\|_{\mathcal{H}}^2 \leq a_2(t)\|\tilde{z}(0)-z(0)\|_{\mathcal{H}}^2+ c(t)\sup_{0\leq s\leq t} \|v-u\|_{p_0}^2,
	\end{equation}
	where $p_0=\max\{4, \frac{6}{4-p}\}< 6$, $b\in L^1(\mathbb{R}^+)$ with $\displaystyle\lim_{t \to \infty} a_2(t)=0$ and $c(t)$ is a locally bounded function. 
\end{proposition}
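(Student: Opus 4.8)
The plan is to reduce everything to the single difference function $w := v - u$, which, by linearity of the principal part, solves
\begin{equation*}
\partial_t^2 w + \mathcal{E} w + \alpha \partial_t w + \big(f(v) - f(u)\big) = 0, \quad w(0) = v_0 - u_0, \ \partial_t w(0) = v_1 - u_1,
\end{equation*}
with homogeneous Dirichlet data. Writing $E_w(t) := \tfrac12 \|(w,\partial_t w)\|_{\mathcal H}^2 = \tfrac12\big(\|w\|_e^2 + \|\partial_t w\|_2^2\big)$ for the energy of the difference, the goal is a differential (or, more precisely, an integral) inequality of the form $E_w'(t) \le -\gamma E_w(t) + C \sup_{0\le s\le t}\|w(s)\|_{p_0}^2$, from which \eqref{est-est} follows by Gronwall. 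Since the initial data lie in a bounded set $B$ and the energy is dissipative, the estimate \eqref{18} combined with $E(t)\le E(0)$ shows that both trajectories stay bounded in $\mathcal H$ on $\mathbb R^+$, uniformly for data in $B$; this uniform bound is what makes all the constants below depend only on $B$ and, by Remark \ref{remark1}, not on $\lambda$.

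To create dissipation in the full norm — the frictional term $\alpha\partial_t w$ only controls $\|\partial_t w\|_2^2$ — I would use the standard perturbed functional
\begin{equation*}
\Phi(t) := E_w(t) + \nu \langle \partial_t w, w\rangle, \qquad \nu > 0 \text{ small},
\end{equation*}
which by Young's inequality and the norm equivalence of Remark \ref{lemma0.1} together with Poincar\'e's inequality is equivalent to $E_w(t)$. Differentiating, using the equation for $w$ and the identity \eqref{9.2}, gives
\begin{equation*}
\Phi'(t) = -(\alpha-\nu)\|\partial_t w\|_2^2 - \nu\|w\|_e^2 - \nu\alpha\langle \partial_t w, w\rangle - \langle f(v)-f(u), \partial_t w\rangle - \nu\langle f(v)-f(u), w\rangle.
\end{equation*}
The cross term $-\nu\alpha\langle\partial_t w, w\rangle$ is absorbed into the two negative quadratic terms by Young's inequality, so that for $\nu$ small one is left with $\Phi'(t) \le -\gamma\, E_w(t) + R(t)$, where $R(t)$ collects the two nonlinear pairings.

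The heart of the matter is estimating $R(t)$, and here I would split $f = g + h$ as in (A3). For the subcritical part $g$, the mean value theorem together with \eqref{4.3} gives the pointwise bound $|g(v)-g(u)| \lesssim (1+|u|^{p-1}+|v|^{p-1})|w|$; feeding $u,v \in L^6$ (energy bound) into a three-factor H\"older inequality and using $H_0^1\hookrightarrow L^6$ produces, after Young, a contribution $\epsilon\,E_w(t) + C_\epsilon\|w\|_{6/(4-p)}^2$, the exponent $6/(4-p)$ being exactly what balances the H\"older triple $L^{6/(p-1)}\times L^{6/(4-p)}\times L^2$. For the critical part $h$, \eqref{aa} yields $|h_i(v_i)-h_i(u_i)| \lesssim (1+|u_i|^2+|v_i|^2)|w_i|$, and now the pairing with $\partial_t w_i$ is exactly $L^2$-critical: a naive H\"older estimate consumes all of $\|w\|_6 \simeq \|w\|_e$ and leaves no room for a compact lower-order seminorm. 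This is the \emph{main obstacle}. I would overcome it by passing to the time-integrated form and invoking the refined estimate for critical nonlinearities from \cite{chueshov-book,Von}, which exploits the exact quadratic growth of $h_i'$ and the uniform $H^1$-bound on $B$ to trade the critical pairing for a small multiple of $\int_0^t \|w\|_e^2\,ds$ plus a locally bounded multiple of $\sup_{[0,t]}\|w\|_4^2$. Setting $p_0 = \max\{4, 6/(4-p)\}<6$ then accommodates both contributions simultaneously.

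Collecting the estimates, choosing $\epsilon$ small enough to absorb all the $\epsilon\,E_w$ terms into the $-\gamma E_w$ term on the right, and integrating the resulting inequality, Gronwall's lemma yields
\begin{equation*}
E_w(t) \le C e^{-\gamma t} E_w(0) + C\sup_{0\le s\le t}\|w(s)\|_{p_0}^2.
\end{equation*}
Using the equivalence $\Phi \simeq E_w \simeq \|(w,\partial_t w)\|_{\mathcal H}^2$ gives \eqref{est-est} with $a_2(t) = Ce^{-\gamma t}$, which is integrable on $\mathbb R^+$ and tends to $0$, and with $c(t) = C$, which is locally bounded. I expect the only genuinely delicate point to be the critical $h$-estimate described above; the subcritical estimate and the perturbed-energy computation are routine, and the uniformity of all constants with respect to $\lambda$ follows from Remarks \ref{lemma0.1} and \ref{remark1}, which will be essential for the later upper-semicontinuity analysis.
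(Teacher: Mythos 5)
Your overall architecture coincides with the paper's: reduce to the difference problem for $w$, form a perturbed energy $E_w+\nu\langle\partial_t w,w\rangle$ (the paper's $\Upsilon=\epsilon_1\Xi+\epsilon_2\chi$), handle the subcritical part $g$ by the three-factor H\"older inequality producing $\|w\|_{6/(4-p)}^2$, and treat the critical part $h$ by a time-integrated estimate producing $\sup_{[0,t]}\|w\|_4^2$, with $p_0=\max\{4,6/(4-p)\}$. The place where your argument does not close as written is the form you assign to that critical estimate. You claim the pairing $\int_0^t\langle h_i(v_i)-h_i(u_i),\partial_t w_i\rangle\,ds$ can be traded for ``a small multiple of $\int_0^t\|w\|_e^2\,ds$'' plus a bounded multiple of $\sup_{[0,t]}\|w\|_4^2$. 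No version of the Chueshov--Lasiecka critical estimate delivers a \emph{small} coefficient there: the constant is proportional to $c_h$ and to powers of the $H^1$-bound of the trajectories on $B$, hence is a fixed $\delta=\delta(B)$, whereas the decay rate $\gamma$ you must beat in Gronwall is itself small (of order $\nu$). With a fixed $\delta$ in front of $\int_0^t E_w\,ds$, Gronwall produces a factor $e^{\delta t}$ that destroys the decay of $a_2$, and there is no parameter left to tune.

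What actually rescues the argument --- Steps 6 and 7 of the paper, quoting \cite[Lemma 4.9]{Ma} --- is that the integration by parts in time places the weight $\|\partial_t u(s)\|_2+\|\partial_t v(s)\|_2$ in front of $\|\nabla w(s)\|_2^2$ inside the integral. This weight is not small pointwise, but the dissipation identity $\int_0^\infty\|\partial_t u\|_2^2\,ds\le 2|E(0)|/\alpha\le Q(B)$ forces $\int_0^t(\|\partial_t u\|_2+\|\partial_t v\|_2)\,ds\le 2\sqrt{Qt}\le\epsilon t+2Q/\epsilon$, so the Gronwall exponential $e^{C\sqrt{t}}$ is absorbed by $e^{-\gamma_0 t}$ after halving the rate. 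You must state and use this weighted form; the unweighted form you wrote is insufficient. A secondary inaccuracy: with the critical $h$ present this method yields $c(t)\sim e^{\gamma_0 t/2}$, locally bounded but exponentially growing, not the constant you assert; a bounded coefficient is obtained in the paper only when $h\equiv 0$ (Theorem \ref{3.16}, item $(ii)$). This does not invalidate the Proposition, which requires only local boundedness of $c(t)$, but it does matter for the extra regularity of the attractor that one later extracts from quasi-stability.
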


\begin{proof}
The estimate \eqref{est-est} is one of the main cores of the present article. Its proof   is quite technical and long, and for this reason  we are going to proceed in several steps as follows.

\smallskip 
\noindent{\it Step 1. Setting the difference problem and functionals.} 	Let us denote $w=u-v$. Then, a simple computation shows that $w$ is a solution (in the weak and strong sense) of the following problem
\begin{eqnarray} \label{29}
\left\lbrace
\begin{array}{lcr}
\partial^2_{t} w+ \mathcal{E} w + \alpha \partial_t w +f(u)-f(v) =0 &\text{on}& \Omega\times\mathbb{R}^{+}, \smallskip \\
w=0 &\text{in}& \partial \Omega\times\mathbb{R}^{+}, \smallskip  \\
w(x,0)=u_0(x)-v_0(x), \ x\in \Omega, && \smallskip  \\
\partial_t w(x,0)=u_1(x)-v_1(x) , \ x\in\Omega.&&
\end{array} 
\right.
\end{eqnarray}
The energy associated with system  (\ref{29}) is given by 
\begin{equation}\label{Xi}
\Xi(t):= \frac{1}{2}\| (w, \partial_t w)\|_{\mathcal{H}}^2= \frac{1}{2} \|\tilde{z}(t)-z(t)\|_{\mathcal{H}}^2, \ \ t\geq0.
\end{equation}
We also set the functional
\begin{align*}
\chi(t) &= \left\langle w, \partial_{t} w \right\rangle,
\end{align*}
and the perturbed energy functional
\begin{align*}
\Upsilon(t) = \epsilon_1 \Xi(t) + \epsilon_2\chi(t) ,
\end{align*}
where the constants $\epsilon_1, \epsilon_2 >0$ will be chosen later.

\smallskip 
\noindent{\it Step 2. Equivalence.} 
	There exist constants $C_1, C_2 >0$ such that 
\begin{equation}\label{26}
C_2\Xi(t) \leq \Upsilon(t) \leq C_1 \Xi(t).
\end{equation}		
Indeed, the inequalities in \eqref{26} follow by taking $K'=\max\{\frac{C^2}{\mu},1\}$, 
$\epsilon_1 > \epsilon_2K'  $, $C_2=\epsilon_1 - \epsilon_2K'$ and $C_1=\epsilon_1 + \epsilon_2K' $.	
	
\smallskip 

\noindent{\it Step 3.  Estimate for $\Xi'$.} 	Given $\xi>0$, there exists a constant $C(\xi,B)>0$, which depends on $\xi$ and $B$, such that
\begin{align}\label{Xi'}
\Xi'(t) \leq -{\alpha} \|\partial_t w\|_2^2+C(\xi,B)\|w\|_{\frac{6}{4-p}}^2+\xi\|\partial_t w\|_2^2 + I,
\end{align}
where we set 
\begin{equation}\label{I}
I := \displaystyle\sum_{i=1}^3 \left\langle h_i(v_i)-h_i(u_i), \partial_t w_i \right\rangle.
\end{equation}
 In fact, we first observe that deriving    $\Xi(t)$ and using (\ref{29}), we get
\begin{align*}
\Xi'(t) =&  -\alpha \|\partial_t w\|_2^2 -\left\langle g(u)-g(v), \partial_t w \right\rangle+I .
\end{align*}
Since
\begin{align*}
|\left\langle g(u)-g(v), \partial_t w \right\rangle| \leq \sum_{i=1}^{3} \int_{\Omega} M_g \left\lbrace 1 + \sum_{i=1}^3 |v_i|^{p-1} +  \sum_{i=1}^3  |u_i|^{p-1}   \right\rbrace|w| |\partial_t w_i|dx,
\end{align*}
then applying H\"older's inequality, we obtain  
\begin{align}
|\left\langle g(u)-g(v), \partial_t w \right\rangle| \leq \sum_{i=1}^{3}  \tilde{C}_f\|w\|_{\frac{6}{4-p}} \|\partial_t w_i\|_2 , 
\end{align}  
where  
\begin{align*}
\tilde{C}_f = M_f \left\lbrace |\Omega|^{\frac{p-1}{6}} + \sum_{i=1}^3\|v_i\|_{6}^{p-1} + \sum_{i=1}^3 \|u_i\|_6^{p-1} \right  \rbrace \leq C(B)<\infty,
\end{align*}
 is a constant depending on $B$. Therefore, the estimate \eqref{Xi'} follows from   Young's inequality with $\xi>0$.

\smallskip 
\noindent{\it Step 4.  Estimate for $\chi'$.}  	There exists a constant $C(B)>0$ depending on $B$ such that
\begin{align}\label{chi'}
\chi'(t) \leq -\Xi(t) -\frac{1}{2}\|w\|_e^2 +\frac{\alpha}{2}\| w\|_2^2  + C(B)\|w\|_{4}^2 + \frac{3+\alpha}{2}\|\partial_t w\|_2^2.
\end{align}
Indeed, multiplying (\ref{29})$_1$ by $w$ and integrating on $\Omega$, we obtain
\begin{align*}
\chi' (t) 
&=-\Xi(t) -\frac{1}{2}\|w\|_e^2 + \frac{\alpha}{2}\| w\|_2^2 + \frac{\alpha+3}{2}\|\partial_t w \|_2^2  \\
&\hspace{3cm}- \left\langle g(u)-g(v),w\right\rangle  +\sum_{i=1}^3 \left\langle h_i(u_i)-h_i(v_i) ,w_i \right\rangle.
\end{align*} 
Now, noting that
\begin{align*}
\left\langle g(v)-g(u),w \right\rangle 
 \leq 3 \left\{|\Omega|^{p-1} + \sum_{i=1}^3 \|u_i\|_{p+1}^{p-1} + \sum_{i=1}^3 \|v_i\|_{p+1}^{p-1}  \right\} \|w\|_{p+1}^2  \leq \tilde{C}_B \|w\|_{p+1}^2,
\end{align*}
and 
\begin{align*}
\sum_{i=1}^3 \left\langle h_i(u_i)-h_i(v_i) ,w_i \right\rangle 
 \leq \sum_{i=1}^3( |\Omega|^2 +\|v_i\|_4^2 + \|u_i\|_4^2)|w_i|_4^2 \leq C_B \|w\|_4^2,
\end{align*}  
where the constants $\tilde{C}_B,C_B>0$	depend only on $B$, then the estimate \eqref{chi'} follows.

\smallskip 
\noindent{\it Step 5.  Estimate for $\Upsilon$.} 
There exists  a  constant   $C_3>0$ depending on $B$ such that
\begin{align}\label{23}
\Upsilon(t)\leq e^{-\frac{\epsilon_2 t}{C_1}}\Upsilon(0) +C_3  \int_0^t e^{-\frac{\epsilon_2 }{C_1}(t-s)}\|w(s)\|_{p_0}^2ds + \epsilon_1 e^{-\frac{\epsilon_2t }{C_1}}J , 
\end{align}
where $C_1>0$ comes from \eqref{26} and we set 
\begin{align}\label{Jota}
J:=\int_0^t e^{\frac{\epsilon_2s }{C_1}}Ids =\sum_{i=1}^3 \int_0^t e^{\frac{\epsilon_2s }{C_1}}  \left\langle h_i(v_i(x,s))-h_i(u_i(x,s)), \partial_t w_i(x,s) \right\rangle ds.
\end{align}
First, we note that from \eqref{Xi'} and \eqref{chi'}, one has  
	\begin{align*}
\Upsilon'(t) 
\leq & -\epsilon_2\Xi(t)+\frac{\alpha\epsilon_2}{2}\|w\|_2^2 + \epsilon_2C(B) \|w\|_{4}^2 +\epsilon_1C(\xi,B)\|w\|_{\frac{6}{4-p}}^2   \\
&  \ + \epsilon_1 I + \left( \frac{3\epsilon_2 + \alpha \epsilon_2}{2} +\epsilon_1\xi-\alpha \epsilon_1\right) \|\partial_t w\|_2^2.
\end{align*}
We now choose $\epsilon_1,\epsilon_2,\xi>0$ small enough such that 
\begin{align*}
\epsilon_2K'   < \epsilon_1 \quad \mbox{ and } \quad 
\frac{3\epsilon_2 + \alpha\epsilon_2}{2} +\epsilon_1\xi <\alpha \epsilon_1.
\end{align*}
It is worth mentioning that   $\epsilon_1, \epsilon_2, \xi >0$ do not depend on $\lambda$. Thus, from this choice, setting  $p_0=\max\{\frac{6}{4-p},4\}$ and using  \eqref{26}, there exists a    constant  $C_3=C(B)>0$ such that
\begin{align*}
\Upsilon'(t) & \leq -\frac{\epsilon_2}{C_1}\Upsilon(t) + C_3 \|w\|_{p_0}^2 + \epsilon_1 I,
\end{align*}
from where it follows the estimate \eqref{23} with $J$ given in \eqref{Jota}.  

\begin{remark}
Since 	the choices for	$\epsilon_1, \epsilon_2$ do not depend on   $\lambda$, then  $C_3>0$ is a constant  that does not depend on $\lambda$ as well.	
\end{remark}

	\smallskip 
\noindent{\it Step 6. Estimate for $J$.} 
There exist constants $\gamma_0>0$ and  $C_4>0$	depending on $B$ such that 
\begin{equation}\label{est-J}
J \leq   C_4 e^{\gamma_0 t} \sup_{0<s<t} \|w\|_4^2 + C_4 \int_0^t (\|\partial_t u(s)\|_2 + \|\partial_t v (s)\|_2) e^{\gamma_0 s}  \Upsilon(s)ds.
\end{equation}
Firstly, in view of assumption \eqref{aa}	
and following verbatim the same arguments as in \cite[Lemma 4.9]{Ma}, for any constant $\gamma>0$ and each $i=1,2,3,$  
there exists a constant $K_i'>0$ such that
\begin{align}
\int_0^t e^{\gamma s} &\left\langle h_i(v_i(s))-h_i(u_i(s)), \partial_t w_i(s) \right\rangle ds  \notag\\
& \leq K_i' e^{\gamma t} \sup_{0<s<t} \|w_i\|_4^2 + K_i' \int_0^t \left(| u_i'(s) |_2 + | v_i'(s) |_2\right) e^{\gamma s} |\nabla w_i(s)|_2^2ds.\label{22}
\end{align}	
Therefore, from   \eqref{Jota} and \eqref{22} it prompt follows
	\begin{align*}
J \leq \sum_{i=1}^3K'_i \sup_{0<s<t} \|w\|_4^2 + \max\{K'_i\}  \int_0^t e^{\gamma_0 s} (\| \partial_t u \|_2 + \|\partial_t v \|_2)  \|\nabla w(s)\|_2^2ds,
\end{align*}	
for 	$\gamma_0=\frac{\epsilon_2}{C_1}>0.$
	Additionally, taking $C_4= \max\{K'_1+K'_2+K'_3, \frac{2\max\{K'_i\}}{\mu C_2}\}>0$ and noting that 
	\begin{align*}
	\|\nabla w(s)\|_2^2 \leq \frac{1}{\mu}\| w\|_{e}^2 \leq \frac{2}{\mu} \Xi(s) \leq \frac{2}{\mu C_2} \Upsilon(s),
	\end{align*}
then estimate \eqref{est-J} follows as desired.

\begin{remark}
We emphasize that constants $\gamma_0$ and $C_4$ do not depend on $\lambda$.	
\end{remark}
	
	\smallskip 
\noindent{\it Step 7. Conclusion of the proof.} 	We are finally in position to complete the proof of \eqref{est-est}. Indeed, from \eqref{23} and \eqref{est-J}, there exists a constant $C_5>0$ depending on $B,$ but independently of  $\lambda$, such that 
	\begin{align*}
e^{\gamma_0 t}\Upsilon(t) \leq & \, 	C_5\Upsilon(0) +C_5 e^{\gamma_0 t} \sup_{0<s<t} \|w\|_{p_0}^2  \\
& \ +C_5  \int_0^t (\|\partial_t u(s)\|_2 + \|\partial_t v (s)\|_2) e^{\gamma_0 s}  \Upsilon(s)ds,
\end{align*} 	
and applying Gronwall's inequality,  one gets
\begin{align}
\Upsilon(t)\leq   	C_5\left\lbrace e^{-\gamma_0 t} \Upsilon(0) +  \sup_{0<s<t} \|w\|_{p_0}^2 \right\rbrace 
e^{\left(C_5  e^{-\gamma_0 t}\int_0^t (\|\partial_t u(s)\|_2 + \|\partial_t v (s)\|_2) e^{\gamma_0 s}ds\right)}. \label{31}
\end{align}
Now, from \eqref{18} and (\ref{17}), and also  in view of Remark \ref{remark1}, we  have	
\begin{equation*}
\int_0^t \|\partial_t u(s)\|_2^2ds = -\frac{1}{\alpha}\int_0^t E'(s)ds  \leq \frac{2|E(0)|}{ \alpha} \leq  Q,
\end{equation*}
where $Q >0$ is a constant depending on $B$ and $f$, but independent of $\lambda.$ The same computation holds true for  $\int_0^t \|\partial_t v(s)\|_2^2ds$. Thus,  using H\"{o}lder and Young's inequalities, we obtain
	\begin{align*}
e^{-\gamma_0 t}\int_0^t (\|\partial_t u(s)\|_2 + \|\partial_t v (s)\|_2) e^{\gamma_0 s}ds \leq 2\sqrt{Q}\sqrt{t}\leq \epsilon t + \frac{2Q}{\epsilon},
\end{align*}
for any $t>0$ and $\epsilon>0$. Replacing the latter estimate in  (\ref{31}), we arrive at
	\begin{align*}
\Upsilon(t) &\leq 	C_5e^{(\epsilon C_5 t + \frac{2C_5Q}{\epsilon})}\left\lbrace e^{-\gamma_0 t}\Upsilon(0) + \sup_{0<s<t} \|w\|_{p_0}^2 \right\rbrace .
\end{align*}
	Taking $\epsilon = \frac{\gamma_0}{2C_4}$ and using  (\ref{26}), we have
\begin{equation}\label{last}
\Xi(t) \leq \frac{C_1C_4e^{\frac{Q}{\gamma_0}}}{C_2} e^{\frac{-\gamma_0}{2} t}\Xi(0) + \frac{C_4e^{\frac{Q}{\gamma_0}}}{C_2} e^{\frac{\gamma_0}{2}t} \sup_{0<s<t} \|w\|_{p_0}^2.
\end{equation}
Finally, regarding the definition of $\Xi(t)$, $t\geq0$, in \eqref{Xi} and setting 
\begin{equation}\label{functions}
a_2(t):=\frac{C_1C_4e^{\frac{Q}{\gamma_0}}}{C_2} e^{\frac{-\gamma_0}{2} t} \ \ \mbox{ and  } \ \ a_3(t):=\frac{2C_4e^{\frac{Q}{\gamma_0}}}{C_2} e^{\frac{\gamma_0}{2}t},
\end{equation}
then \eqref{last} leads to \eqref{est-est} as desired. 

The proof of Proposition \ref{Theorem8} is therefore concluded. 
\end{proof}

\begin{corollary}[Quasi-stability]\label{cor-qs}
		Under the assumptions of Theorem $ \ref{3.16},$ the 	dynamical system $(\mathcal{H}, S(t))$ associated with problem \eqref{problem} is quasi-stable on any bounded set $B\subset\mathcal{H}$.	
\end{corollary}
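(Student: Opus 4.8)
The plan is to read the two defining inequalities of quasi-stability directly off results already in hand, after fixing the correct functional framework. Here the abstract spaces are $X = X^{1/2} = (H_0^1(\Omega))^3$ and $Y = X^0 = (L^2(\Omega))^3$, so that $\mathcal{H} = X \times Y$ and the embedding $X \overset{c}{\hookrightarrow} Y$ is compact by Rellich--Kondrachov. Moreover, by Theorem \ref{Theorem3.4} the semigroup splits as $S(t)z = (u(t), \partial_t u(t))$ with $u \in C(\mathbb{R}^+; X) \cap C^1(\mathbb{R}^+; Y)$, exactly matching the structure demanded by the definition of quasi-stability. Thus it remains only to exhibit functions $a_1, a_2, a_3$ and a compact semi-norm $\eta_X$ on $X$ for which the two estimates hold uniformly for initial data in a fixed bounded set $B$.

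First I would obtain the Lipschitz-type inequality. Fixing any $T > 0$, the continuous dependence estimate in Theorem \ref{Theorem3.4}$(iii)$ gives, for solutions with data $z^1, z^2 \in B$,
\[
\|S(t)z^1 - S(t)z^2\|_{\mathcal{H}}^2 \leq C_{BT}\|z^1 - z^2\|_{\mathcal{H}}^2, \quad 0 \leq t \leq T,
\]
so setting $a_1(t) := C_{BT}$, which is locally bounded in $t$, yields the first required bound. The second inequality is precisely the content of the Stabilizability Estimate, Proposition \ref{Theorem8}. Writing $w = u - v$, the estimate \eqref{est-est} together with the explicit functions in \eqref{functions} gives
\[
\|S(t)z^1 - S(t)z^2\|_{\mathcal{H}}^2 \leq a_2(t)\|z^1 - z^2\|_{\mathcal{H}}^2 + a_3(t)\sup_{0 \leq s \leq t}\|w(s)\|_{p_0}^2,
\]
with $a_2(t) = \tfrac{C_1 C_4 e^{Q/\gamma_0}}{C_2}e^{-\gamma_0 t/2}$, which is non-negative, lies in $L^1(\mathbb{R}^+)$, and satisfies $\lim_{t \to \infty} a_2(t) = 0$, and with $a_3(t) = \tfrac{2 C_4 e^{Q/\gamma_0}}{C_2}e^{\gamma_0 t/2}$, which is non-negative and locally bounded on $\mathbb{R}^+$. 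One then declares $\eta_X(w) := \|w\|_{p_0}$ to be the candidate semi-norm.

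The only genuine point to verify, and the crux of the argument, is that $\eta_X$ is a \emph{compact} semi-norm on $X = (H_0^1(\Omega))^3$. This is where the subcriticality encoded in the exponent $p_0 = \max\{4, \tfrac{6}{4-p}\}$ enters: since $p < 3$ forces $p_0 < 6$, and $6$ is the critical Sobolev exponent in dimension three, the embedding $(H_0^1(\Omega))^3 \hookrightarrow (L^{p_0}(\Omega))^3$ is compact by the Rellich--Kondrachov theorem. Hence $w \mapsto \|w\|_{p_0}$ is a semi-norm on $X$ sending bounded sets to relatively compact ones, i.e. a compact semi-norm in the required sense. With $a_1, a_2, a_3$ and $\eta_X$ thus identified and their properties checked, both defining inequalities of quasi-stability hold on the arbitrary bounded set $B$, completing the proof. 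I do not anticipate any serious obstacle beyond this verification, since the analytically demanding work has already been carried out in Proposition \ref{Theorem8}; the role of the corollary is simply to recast that estimate in the abstract language of \cite{chueshov-book,Von}.
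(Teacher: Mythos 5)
Your proposal is correct and follows essentially the same route as the paper: the first quasi-stability inequality is read off from Theorem \ref{Theorem3.4}$(iii)$, the second from the stabilizability estimate \eqref{est-est} with the functions $a_2, a_3$ from \eqref{functions}, and the key point is the compactness of the semi-norm $\|\cdot\|_{p_0}$ on $(H_0^1(\Omega))^3$, which holds by Rellich--Kondrachov since $p_0<6$. The paper states this in one line; your write-up merely makes the same verification explicit.
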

\begin{proof}
	It is a direct consequence of Theorem \ref{Theorem3.4} - $(iii)$ and Proposition \ref{Theorem8} by noting the semi-norm given by $n_{H^{1}_{0}}(v-u)=\|v-u\|_{p_0}$ is compact.
\end{proof}

\subsubsection{Conclusion of the proof of Theorem $ \ref{3.16} $}  

\begin{itemize}
	\item[$(i)$] From Proposition \ref{corollary1} and Corollary \ref{cor-qs}, the 		dynamical system $(\mathcal{H}, S(t))$ related to problem \eqref{problem} is asymptotically smooth. Therefore, using Lemma \ref{Lemma3.9} and Propositions \ref{corollary} and \ref{corollary2}, the conclusion of Theorem $ \ref{3.16} $ - $ (i)$ is complete. 
	
		\item[$(ii)$] In  case $h_i=0$, $i=1,2,3$, then going back to \eqref{I}, one sees that $I=0$ and, consequently, from \eqref{Jota} one gets $J=0$. Thus, \eqref{23} reduces to
		\begin{align*}
	\Upsilon(t)\leq e^{-\frac{\epsilon_2 t}{C_1}}\Upsilon(0) +\frac{C_3 C_1}{\epsilon_2} \sup_{0 \leq s \leq t} \|w(s)\|_{p_0'}^2 \big(1-e^{-\frac{\epsilon_2 }{C_1}t}\big)  ,
	\end{align*}	
$p_0'=\max\{\frac{6}{4-p},p+1\}$.		
In this way, one reaches 	\eqref{last} (respec.  \eqref{est-est}) with 
$$
a_3(t):= \frac{C_3 C_1}{\epsilon_2}   \big(1-e^{-\frac{\epsilon_2 }{C_1}t}\big),
$$	
instead of $a_3(t)$ given in \eqref{functions}.	Thus, $c_{\infty}=\sup _{t \in \mathbb{R}^{+}} a_3(t)<\infty$, and from \cite[Theorem 7.9.8]{Von}, the regularity properties \eqref{reg-1}-\eqref{reg-2} are ensured, that is, the conclusion of Theorem $ \ref{3.16} $ - $ (ii)$ is complete.

Therefore, the proof of Theorem $ \ref{3.16} $ is ended. 
\end{itemize}

\section{Upper semicontinuity}\label{sec-upper-sem}
\setcounter{equation}{0}

Along this section $\varepsilon$ denotes a real number in $[0, 1]$ and assume $\lambda + \mu = \varepsilon$. 
Thus, problem (\ref{problem}) can be rewritten as follows
\begin{eqnarray}
\left\lbrace
\begin{array}{ll}
\partial^2_{t} u - \mu \Delta u - \varepsilon  \nabla {\rm div} u + \alpha \partial_t u + f(u) = b & \text{in} \,\; \Omega\times\mathbb{R}^{+}, \smallskip \\
u=0 &  \text{on} \,\; \partial \Omega\times\mathbb{R}^{+}, \smallskip \\
u(0)=u_0, \,\; \partial_t u(0)=u_1 &  \text{in} \; \Omega,
\end{array} 
\right.
\label{problem-sing}
\end{eqnarray}
In this way, instead of operator \eqref{operator}, we write the $\varepsilon$-operator 
\begin{align*} 
\mathcal{E}_{\varepsilon}u:= -\mu \Delta u - \varepsilon \nabla \text{div}(u), \ \ \mbox{ for } \ \ u=(u_1,u_2,u_3).
\end{align*}
Hereafter, we denote by $P_\varepsilon$ the $\varepsilon$-problem \eqref{problem-sing}  and, in view of  Theorem \ref{3.16}, we also denote by $\mathcal{A}_\varepsilon$ the compact finite dimensional global attractor of its associated dynamical system.
The energy corresponding to $P_\varepsilon$ is still given by (\ref{16}) and denoted here as $E_\varepsilon(t)$.

\smallskip 

Using the same notation as in Section \ref{sec-well-posed}, we define the inner-product 
\[
\left\langle v,w \right\rangle_{\varepsilon} =  \mu \left\langle \nabla v,\nabla w \right\rangle+ \varepsilon \left\langle \text{div}u, \text{div}w \right\rangle. 
\] 
Then, the norm $\|\cdot \|_\varepsilon = \sqrt{\left\langle v,w \right\rangle_{\varepsilon}}$ satisfies that 
\begin{align}\label{up}
\mu\|\nabla \cdot \|_2^2 \leq \| \cdot \|_{\varepsilon}^2 \leq \max\{\mu, 3\} \| \nabla \cdot \|_2^2.
\end{align}
Additionally, let us denote by 
$$
\mathcal{H}_\varepsilon = ((H_0^1(\Omega))^3, \|\cdot\|_\varepsilon) \times ((L^2(\Omega))^3, \|\cdot \|_2)
$$
the space of weak solutions associated to $P_\varepsilon$, and 
$$
\mathcal{H}_0^1 = (D(-\Delta), \|\mu \Delta \cdot\|_2) \times ((H_0^1(\Omega))^3, \|\mu \nabla \cdot\|_2)
$$ the space of strong solutions associated to $P_0$.  

Analogously, we denote by 
$(\mathcal{H}_\varepsilon,S_\varepsilon(t))$ the dynamical system associated with $P_\varepsilon$, and by 
$\mathcal{N}_\varepsilon$, its corresponding set of stationary solutions. The existence of a global attractor $\mathcal{A}_\varepsilon$ 
as well as its properties are ensured by Theorem \ref{3.16}.

\smallskip 

In this section, our main goal is to study the upper semicontinuity of attractors $\mathcal{A}_\varepsilon$ 
with respect to the parameter $\varepsilon \to 0$. More precisely, our main results are presented in Theorems \ref{singular} and \ref{upper}. To this end, we need to prove the following two properties: the existence of an absorbing set which does not depends on $\varepsilon$ and the convergence in some sense of the solutions of $P_\varepsilon$ when $\varepsilon \to 0$. 

\smallskip

For the existence of an absorbing set we need the following result which is a direct consequence of \cite[Remark 7.5.8]{Von}.

\begin{theorem}\label{Remark3.11}
	Under the conditions $(\ref{2})$-$(\ref{aa})$, 
	the following inequality holds true for 
 the attractor $\mathcal{A}$ in Theorem  $ \ref{3.16} $ and $\Psi$ given in Lemma $ \ref{Lemma3.9} $:
	\[  \sup\{\Psi(u,\partial_{t} u): (u,\partial_{t} u) \in \mathcal{A} \}  \leq  \sup\{\Psi(u,0): (u,0) \in \mathcal{N} \}.   \]	
\end{theorem}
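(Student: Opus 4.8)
The plan is to exploit the gradient structure established in Lemma \ref{Lemma3.9} together with the characterization $\mathcal{A}=W^u(\mathcal{N})$ from Theorem \ref{3.16}. Fix an arbitrary point $z=(u,\partial_t u)\in\mathcal{A}$. Since $\mathcal{A}$ coincides with the unstable manifold emanating from $\mathcal{N}$, by the very definition of $W^u(\mathcal{N})$ there exists a full trajectory $\{y(t)\}_{t\in\mathbb{R}}\subset\mathcal{A}$ with $y(0)=z$ and $\limsup_{t\to-\infty}\mathrm{dist}(y(t),\mathcal{N})=0$. The invariance of $\mathcal{A}$ guarantees this trajectory stays inside the (compact) attractor for all time.

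Next I would use that $\Psi$ is a strict Lyapunov functional, so the map $t\mapsto\Psi(y(t))$ is non-increasing along the trajectory; in particular $\Psi(z)=\Psi(y(0))\leq\Psi(y(t))$ for every $t\leq0$. Letting $t\to-\infty$ yields $\Psi(z)\leq\lim_{t\to-\infty}\Psi(y(t))$, where the limit exists in $\mathbb{R}$ by monotonicity and is finite because $\Psi$ is continuous on the compact set $\mathcal{A}$. To bound this backward limit, I would pick a sequence $t_n\to-\infty$ and points $p_n\in\mathcal{N}\subset\mathcal{A}$ with $\|y(t_n)-p_n\|_{\mathcal{H}}\to0$; since $\Psi$ is uniformly continuous on the compact set $\mathcal{A}$, this gives $\Psi(y(t_n))-\Psi(p_n)\to0$, and therefore
$$
\lim_{t\to-\infty}\Psi(y(t))=\lim_{n\to\infty}\Psi(y(t_n))\leq\sup\{\Psi(u,0):(u,0)\in\mathcal{N}\},
$$
using the explicit form $\mathcal{N}=\{(u,0):u\text{ solves }(\ref{bb})\}$ obtained in Lemma \ref{Lemma3.9}. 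Combining the two inequalities gives $\Psi(z)\leq\sup\{\Psi(u,0):(u,0)\in\mathcal{N}\}$, and taking the supremum over $z\in\mathcal{A}$ completes the argument.

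The main obstacle is the passage to the limit as $t\to-\infty$: one must guarantee that the accumulation points of $y(t)$ lie in $\mathcal{N}$ and that the backward limit of $\Psi$ is genuinely controlled by its values there. This hinges on the compactness of $\mathcal{A}$, which both makes $\Psi$ uniformly continuous on $\mathcal{A}$ and allows $\mathrm{dist}(y(t),\mathcal{N})\to0$ to transfer to a bound on $\Psi(y(t))$. Alternatively, as the statement indicates, the whole argument is subsumed by the abstract criterion \cite[Remark 7.5.8]{Von} for gradient systems; one may instead simply verify its hypotheses — the gradient structure and boundedness of $\mathcal{N}$ furnished by Lemma \ref{Lemma3.9} together with the energy bounds \eqref{18} — and invoke that result directly.
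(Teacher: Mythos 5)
Your argument is correct, and it matches the paper's approach: the paper proves this statement simply by declaring it ``a direct consequence of [Chueshov--Lasiecka, Remark 7.5.8]'', and what you have written out (unstable-manifold characterization of $\mathcal{A}$, monotonicity of $\Psi$ along a full trajectory backward in time, and passage to the limit onto $\mathcal{N}$ using compactness of $\mathcal{A}$ and continuity of $\Psi$) is precisely the standard proof of that cited remark, which you also explicitly acknowledge as the shortcut the authors take.
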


With respect to the convergence of solutions, it is important to note that the phase space $\mathcal{H}_\varepsilon$ changes when $\varepsilon \to 0$. So, the convergence of the solutions of $P_\varepsilon$ is \textit{singular} in the same sense proposed in \cite{singular}.

\begin{lemma}\label{Lemma4.1} Under the conditions $(\ref{2})$-$(\ref{aa})$, there exists a bounded absorbing set  $\mathcal{B}$ for $(\mathcal{H}_\varepsilon, S_\varepsilon(t))$, that does not depend on $\varepsilon$.
\end{lemma}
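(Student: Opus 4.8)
The plan is to use the gradient structure via Theorem \ref{Remark3.11}, and to anchor every estimate in the fixed reference norm $\|z\|_*^2:=\|\nabla u\|_2^2+\|v\|_2^2$ of the $\varepsilon$-independent space $(H_0^1(\Omega))^3\times(L^2(\Omega))^3$, which by \eqref{up} is uniformly equivalent to $\|\cdot\|_{\mathcal{H}_\varepsilon}$: there exist constants $0<c_1\le c_2$, independent of $\varepsilon$, with $c_1\|z\|_*^2\le\|z\|_{\mathcal{H}_\varepsilon}^2\le c_2\|z\|_*^2$. First I would bound the stationary set $\mathcal{N}_\varepsilon$ uniformly. Running the computation of Lemma \ref{Lemma3.9} for the $\varepsilon$-stationary problem $\mathcal{E}_\varepsilon u+f(u)=b$ yields the analogue of \eqref{eq23}; choosing the Young parameter $\theta>0$ fixed so that the coefficient $1-\tfrac{2M}{\lambda_1\mu}-\tfrac{1}{4\lambda_1\mu\theta}$ is positive (possible by \eqref{4.2}), one obtains $\|u\|_\varepsilon^2\le C$ for every $(u,0)\in\mathcal{N}_\varepsilon$, where $C$ depends only on $m_f,|\Omega|,\|b\|_2,\mu,\lambda_1,M$ and hence not on $\varepsilon$. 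Since on $\mathcal{N}_\varepsilon$ one has $\|(u,0)\|_{\mathcal{H}_\varepsilon}^2=\|u\|_\varepsilon^2\le C$, the upper bound in \eqref{18}, whose constants are $\varepsilon$-free by Remark \ref{remark1}, gives $\sup\{\Psi_\varepsilon(u,0):(u,0)\in\mathcal{N}_\varepsilon\}\le K_1 C^2+K_3=:R_0$, with $R_0$ independent of $\varepsilon$.

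Next, Theorem \ref{Remark3.11} transfers this bound to the attractor: $\sup\{\Psi_\varepsilon(z):z\in\mathcal{A}_\varepsilon\}\le\sup\{\Psi_\varepsilon(u,0):(u,0)\in\mathcal{N}_\varepsilon\}\le R_0$ for every $\varepsilon$. Using now the lower bound in \eqref{18} (again with $\varepsilon$-free constants, by Remark \ref{remark1}), every $z\in\mathcal{A}_\varepsilon$ satisfies $K_2\|z\|_{\mathcal{H}_\varepsilon}^2-K_3\le R_0$, so $\|z\|_{\mathcal{H}_\varepsilon}^2\le(R_0+K_3)/K_2$ and, via the uniform equivalence above, $\|z\|_*^2\le(R_0+K_3)/(c_1K_2)=:\rho_0^2$. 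Thus every attractor lies in the single fixed ball $B_*(\rho_0):=\{z:\|z\|_*\le\rho_0\}$, with $\rho_0$ independent of $\varepsilon$.

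Finally I would set $\mathcal{B}:=B_*(\rho)$ for any fixed $\rho>\rho_0$; this is a bounded, closed set independent of $\varepsilon$. To check it absorbs, fix $\varepsilon$ and a bounded $B\subset\mathcal{H}_\varepsilon$. By Theorem \ref{3.16} the attractor attracts $B$, i.e.\ $d_{\mathcal{H}_\varepsilon}(S_\varepsilon(t)B,\mathcal{A}_\varepsilon)\to0$. Put $\eta:=(\rho-\rho_0)\sqrt{c_1}>0$. If $\|w-z\|_{\mathcal{H}_\varepsilon}<\eta$ for some $z\in\mathcal{A}_\varepsilon$, then $\|w-z\|_*\le\|w-z\|_{\mathcal{H}_\varepsilon}/\sqrt{c_1}<\rho-\rho_0$, whence $\|w\|_*\le\|z\|_*+\|w-z\|_*<\rho$, so $w\in\mathcal{B}$; in other words the $\eta$-neighborhood of $\mathcal{A}_\varepsilon$ in $\|\cdot\|_{\mathcal{H}_\varepsilon}$ is contained in $\mathcal{B}$. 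By attraction there is $t_B\ge0$ with $S_\varepsilon(t)B\subset\mathcal{B}$ for all $t\ge t_B$, proving that $\mathcal{B}$ is a bounded absorbing set that does not depend on $\varepsilon$.

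The main obstacle is purely one of uniform bookkeeping: since the phase space $\mathcal{H}_\varepsilon$ and its norm vary with $\varepsilon$, one cannot present the absorbing set as a level set of $\Psi_\varepsilon$ or as an $\mathcal{H}_\varepsilon$-ball, and the whole argument hinges on reducing to the fixed norm $\|\cdot\|_*$ through the $\varepsilon$-uniform equivalence \eqref{up}. The delicate point is to verify that each constant entering the chain—the bound $\|u\|_\varepsilon^2\le C$ on $\mathcal{N}_\varepsilon$, the radius $R_0$, the endpoints $c_1,c_2$ of \eqref{up}, and the $K_i$ in \eqref{18}—is genuinely free of $\varepsilon$, which is exactly what Remark \ref{remark1} and the $\varepsilon$-independent endpoints of \eqref{up} guarantee.
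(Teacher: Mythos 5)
Your proposal is correct and follows essentially the same route as the paper: bound $\mathcal{N}_\varepsilon$ uniformly via \eqref{eq23}, transfer to $\mathcal{A}_\varepsilon$ through Theorem \ref{Remark3.11} and the two-sided bounds \eqref{18} (whose constants are $\varepsilon$-free by Remark \ref{remark1}), and then take a slightly larger ball in the fixed norm using \eqref{up}. Your explicit verification that an $\eta$-neighborhood of each $\mathcal{A}_\varepsilon$ sits inside $\mathcal{B}$, so that attraction yields absorption, is a detail the paper leaves implicit but is a welcome addition.
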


\begin{proof}
	Denoting by $\Psi_\varepsilon$ the Lyapunov functional defined on $\mathcal{H}_\varepsilon$, then from (\ref{18}), Remarks \ref{remark1}  and  Theorem \ref{Remark3.11},
	\begin{align*}
	\sup_{z \in \mathcal{A}_\varepsilon} \|z\|_{\mathcal{H}_\varepsilon}^2  &\leq \frac{\sup_{z \in \mathcal{A}_\varepsilon}\Psi_\varepsilon(z) + K_3}{K_2}\\
	&\leq \frac{\sup_{z \in \mathcal{N}_\varepsilon}\Psi_{\varepsilon}(z) + K_3}{K_2}\\
	&\leq \frac{ K_1\sup_{z \in \mathcal{N}_\varepsilon}\|z\|_{\mathcal{H}_\varepsilon}^4 + 2K_3}{K_2}.
	\end{align*}  
	Thus, from (\ref{eq23}) there exists a constant $R_1$ which does not depend on $\varepsilon$ such that 
	\begin{align*}
	\sup_{z \in \mathcal{A}_\varepsilon} \|z\|_{\mathcal{H}_\varepsilon}^2  \leq R_1^2, \,\,\ \forall \varepsilon \in [0,1] .
	\end{align*}
	Let us define $\mathcal{B}= \left\{z \in \mathcal{H}_0 : \|z\|_{\mathcal{H}_0}^2 \leq R_1+1 \right\}$, then from (\ref{up})  $A_\varepsilon \subset \mathcal{B}$, for all $\varepsilon$.
\end{proof}

\begin{lemma}\label{lemma15}
	Let $B$ be a bounded subset in $\mathcal{H}_0$ 
	and $\{z_\varepsilon =(u_\varepsilon, v_\varepsilon)\}_\varepsilon \subset B$ a family of initial data related to each $P_\varepsilon$ with solutions $\{S_\varepsilon(t) z_\varepsilon\}_\varepsilon$. Then there exists a constant $\hat{C}$ that  does not depend on $t,\varepsilon$ such that  
	\begin{equation*}
	E_\varepsilon(t) \leq \hat{C} \quad \mbox{ and } \quad 
	\|S_\varepsilon(t) z_\varepsilon\|_{\mathcal{H}_\varepsilon} \leq \hat{C}, \quad \forall \ \varepsilon,t >0.
	\end{equation*}
\end{lemma}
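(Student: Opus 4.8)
The plan is to derive a uniform-in-$\varepsilon$ bound on the energy $E_\varepsilon(t)$ by combining the structural energy estimates already established in the excerpt, and then transfer that bound to $\|S_\varepsilon(t)z_\varepsilon\|_{\mathcal{H}_\varepsilon}$ via the coercivity estimate \eqref{18}. The central observation is that, by Remark \ref{remark1}, the constants $K_1,K_2,K_3$ in \eqref{18} are independent of $\lambda$, hence independent of $\varepsilon = \lambda+\mu$; this is precisely what makes a uniform bound possible. First I would invoke the energy identity: since $E_\varepsilon$ plays for $P_\varepsilon$ the same role that $E$ plays for \eqref{problem}, the dissipation relation \eqref{17} gives $E_\varepsilon'(t) = -\alpha\|\partial_t u_\varepsilon\|_2^2 \le 0$, so that $E_\varepsilon(t) \le E_\varepsilon(0)$ for all $t \ge 0$ and every $\varepsilon$.

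Next I would bound the initial energy $E_\varepsilon(0)$ uniformly. The initial datum $z_\varepsilon$ lies in the fixed bounded set $B \subset \mathcal{H}_0$, so $\|z_\varepsilon\|_{\mathcal{H}_0}$ is controlled by the radius of $B$. Using the upper estimate in \eqref{18}, namely $E_\varepsilon(0) \le K_1\|z_\varepsilon\|_{\mathcal{H}_\varepsilon}^4 + K_3$, together with the norm comparison \eqref{up} which gives $\|\cdot\|_\varepsilon^2 \le \max\{\mu,3\}\|\nabla\cdot\|_2^2$ uniformly in $\varepsilon \in [0,1]$, I can bound $\|z_\varepsilon\|_{\mathcal{H}_\varepsilon}$ in terms of $\|z_\varepsilon\|_{\mathcal{H}_0}$ and hence in terms of the radius of $B$ alone. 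This yields a constant $\hat C$, depending only on $B$, $K_1$, $K_3$ and $\mu$, such that $E_\varepsilon(t) \le E_\varepsilon(0) \le \hat C$ for all $\varepsilon,t > 0$, which is the first asserted inequality.

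For the second inequality, I would run \eqref{18} in the reverse direction: its lower bound reads $K_2\|S_\varepsilon(t)z_\varepsilon\|_{\mathcal{H}_\varepsilon}^2 - K_3 \le E_\varepsilon(t)$. Combining this with the uniform energy bound just obtained gives
\[
\|S_\varepsilon(t)z_\varepsilon\|_{\mathcal{H}_\varepsilon}^2 \le \frac{E_\varepsilon(t) + K_3}{K_2} \le \frac{\hat C + K_3}{K_2},
\]
and enlarging $\hat C$ to absorb the right-hand constant yields the claim. I do not expect a serious obstacle here; the only point requiring care is bookkeeping the $\varepsilon$-independence of every constant in the chain — in particular verifying that the equivalence constants in \eqref{up} are uniform over $\varepsilon \in [0,1]$ (which they are, since $\max\{\mu,3\}$ and $\mu$ do not involve $\varepsilon$) and that $K_1,K_2,K_3$ carry over from \eqref{18} unchanged by Remark \ref{remark1}. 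The mild subtlety is that $E_\varepsilon(0)$ must be estimated using the norm $\|\cdot\|_\varepsilon$ native to $\mathcal{H}_\varepsilon$ while the hypothesis controls the $\mathcal{H}_0$-norm of the data, so the translation between the two phase-space norms via \eqref{up} is the key algebraic step to state explicitly.
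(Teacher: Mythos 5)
Your proposal is correct and follows essentially the same route as the paper's proof: monotonicity of $E_\varepsilon$, the two-sided estimate \eqref{18} with $\varepsilon$-independent constants $K_1,K_2,K_3$ (Remark \ref{remark1}), and the uniform norm comparison \eqref{up} to control $E_\varepsilon(0)$ by the radius of $B$ in $\mathcal{H}_0$. No gaps; your explicit attention to the translation between the $\mathcal{H}_\varepsilon$- and $\mathcal{H}_0$-norms is exactly the key bookkeeping step the paper performs.
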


\begin{proof}
	From (\ref{18}), (\ref{up}), Remark \ref{remark1} and the fact that for each $\varepsilon$, $E_\varepsilon$ is decreasing, we have
	\begin{align*}
	K_2\|S_\varepsilon(t) z_\varepsilon\|_{\mathcal{H}_\varepsilon}   - K_3 	& \leq E_\varepsilon(t) \leq E_\varepsilon(0) \\
	&\leq K_1(\|u_\varepsilon \|_\varepsilon^2 + \|v_\varepsilon \|_2^2)^4 + K_3  \\
	& \leq K_1\hat{K}(B,\mu)+K_3
	\end{align*}
	where $\hat{K}(B,\mu)$ is a constant which depends only on $B$ and $\mu$, and $K_1, K_2$ and $ K_3$ do not depend on $t$ and $\varepsilon$.
\end{proof}

Now we are in position to state and prove the main results of this section.

\begin{theorem}[Singular limit]\label{singular}
	Under the assumptions of $(\ref{2})$-$(\ref{aa})$. Given a sequence $\{\varepsilon_n\} $ of positive numbers, let $(u_n(t), \partial_t u_n (t))$ be the weak solution to $P_{\varepsilon_n}$ with initial data $(v_0,v_1) \in \mathcal{H}_0$. 
	Then if $\varepsilon_n \to 0$ when $n \to \infty$, there exist a weak solution $(u(t),\partial_t u(t))$ of $P_0$ with the same initial data, such that for any $T>0$:
	\begin{align*}
	u_n &\overset{*}{\rightharpoonup} u \text{ in } L^{\infty}(0,T; (H_0^1(\Omega))^3),\\
	\partial_t u_n &\overset{*}{\rightharpoonup} \partial_t u \text{ in } L^{\infty}(0,T; (L^2(\Omega))^3).
	\end{align*}   
	
\end{theorem}

\begin{proof}
	Using Lemma \ref{lemma15} for $B=\{(v_0,v_1)\}$ and equation (\ref{up}), for some constant $K$,
	\begin{align}\label{36}
	\| (u_n(t), \partial_t u_n(t)) \|_{\mathcal{H}_0} \leq K.
	\end{align}
	Then, we have  for any $T>0$,
	\begin{align*}
	u_n &\overset{*}{\rightharpoonup} u \text{ in } L^{\infty}(0,T; (H_0^1(\Omega))^3),\\
	\partial_t u_n &\overset{*}{\rightharpoonup} \partial_t u \text{ in } L^{\infty}(0,T; (L^2(\Omega))^3).
	\end{align*} 	
Fixing $n$ and multiplying $P_{\varepsilon_n}$ by a function $\phi \in (H_0^1(\Omega))^3$, we get
	\begin{align}\label{33}
	\frac{d}{dt}\left\langle \partial_{t} u_n,\phi\right\rangle+\mu \left\langle \nabla u_n,\nabla \phi\right\rangle
	&+ \varepsilon_n  \left\langle \text{div}u_n, \text{div}\phi\right\rangle  \\
	&+ \alpha \left\langle \partial_t u_n, \phi\right\rangle + \left\langle f (u_n), \phi\right\rangle = \left\langle b,\phi\right\rangle. \notag
	\end{align} 
It is clear that
	\begin{align*}
	\left\langle \nabla u_n,\nabla \phi\right\rangle &\underset{n \to \infty}{\longrightarrow}  \left\langle \nabla u,\nabla \phi\right\rangle,\\
	\left\langle \partial_t u_n, \phi\right\rangle &\underset{n \to \infty}{\longrightarrow} \left\langle \partial_t u, \phi\right\rangle, \\
	\varepsilon_n  \left\langle \text{div}u_n, \text{div}\phi\right\rangle &\underset{n \to \infty}{\longrightarrow} 0 \,\,\,\text{ from } (\ref{36}).
	\end{align*}
	Additionally, 
	we have 
	\begin{align*}
	\left\langle f(u_n)-f(u),\phi\right\rangle &\leq \sum_{i=1}^3 \int_{\Omega} |f_i(u_n)-f_i(u)||\phi_i|\\
	&\leq \sum_{i=1}^3 \int_{\Omega} M_g(1 + \sum_{j=1}^3|u_n^j |^{p-1} + |u^j |^{p-1})|u_n-u| |\phi_i|.
	\end{align*} 
Then proceeding analogously to (\ref{23})-(\ref{Jota}) and using Simon's compactness theorem \cite{simon} 
we have that  
	\begin{align*}
	\|u_n-u\|_2, \|u_n-u\|_{\frac{6}{4-p}}  \to 0,
	\end{align*}
which implies
	\begin{align*}
	\left\langle f(u_n),\phi\right\rangle &\underset{n \to \infty}{\longrightarrow}  \left\langle f(u),\phi\right\rangle.
	\end{align*}
	Therefore (\ref{33}) converges to
	\begin{align}\label{34}
	\frac{d}{dt}\left\langle \partial_{t} u,\phi\right\rangle+\mu \left\langle \nabla u,\nabla \phi\right\rangle
	+ \alpha \left\langle \partial_t u, \phi\right\rangle + \left\langle f (u), \phi\right\rangle = \left\langle b,\phi\right\rangle,
	\end{align} 
	which means that $(u,\partial_t u)$ is a weak solution of $P_0$ and $u(0)=u_0$.
	
	Finally we multiply equations (\ref{33}) and (\ref{34}) by a test function $\psi \in H^1([0,T])$ such that $\psi(0)=1$,  $\psi(T)=0$  and integrating on $[0,T]$, we obtain for all $\phi \in (H_0^1(\Omega))^3$,	
	\begin{align*}
	&\int_0^T\frac{d}{dt}\left\langle \partial_{t} u_n,\phi\right\rangle \psi dt+\mu\int_0^T \left\langle \nabla u_n,\nabla \phi\right\rangle \psi dt + \varepsilon_n \lambda_0 \int_0^T \left\langle \text{div}u_n, \text{div}\phi\right\rangle \psi dt  \\
	&\hspace{3.2cm}\alpha \int_0^T\left\langle \partial_t u_n, \phi\right\rangle \psi dt + \int_0^T\left\langle f (u_n), \phi\right\rangle \psi dt= \int_0^T\left\langle b,\phi\right\rangle \psi dt,\\
	&\int_0^T\frac{d}{dt}\left\langle \partial_{t} u_,\phi\right\rangle \psi dt+\mu \int_0^T\left\langle \nabla u_,\nabla \phi\right\rangle \psi dt + \alpha \int_0^T\left\langle \partial_t u, \phi\right\rangle \psi dt + \int_0^T\left\langle f (u), \phi\right\rangle \psi dt \\
	&\hspace{9.3cm}= \int_0^T\left\langle b,\phi\right\rangle \psi dt.
	\end{align*}
Solving the integrals and taking $n \to \infty$,
	\begin{align*}
	&-\left\langle v_1,\phi\right\rangle - \int_0^T\left\langle \partial_{t} u,\phi\right\rangle \frac{d}{dt}\psi dt +\mu\int_0^T \left\langle \nabla u,\nabla \phi\right\rangle \psi dt + \alpha \int_0^T\left\langle \partial_t u, \phi\right\rangle \psi dt\\  
	&\hspace{7.0cm}+\int_0^T\left\langle f (u), \phi\right\rangle \psi dt = \int_0^T\left\langle b,\phi\right\rangle \psi dt,\\
	&-\left\langle \partial_{t} u(0),\phi\right\rangle - \int_0^T\left\langle \partial_{t} u,\phi\right\rangle \frac{d}{dt}\psi dt +\mu \int_0^T\left\langle \nabla u_,\nabla \phi\right\rangle \psi dt  + \alpha \int_0^T\left\langle \partial_t u, \phi\right\rangle \psi dt  \\
	&\hspace{7.0cm} +\int_0^T\left\langle f (u), \phi\right\rangle \psi dt = \int_0^T\left\langle b,\phi\right\rangle \psi dt.
	\end{align*}
	
	Therefore, $\partial_{t} u(0)=v_1$, which ends the proof.
\end{proof}

\begin{remark}
From Theorem \ref{singular} and its proof, it is worth making two comments as follows:
 \begin{itemize} 	
		\item the limit in the previous theorem is singular in the sense that $\mathcal{H}_{\varepsilon}$ is not the same for $\varepsilon$ varying the range $[0, 1]$;
		
		\item	the space $\mathcal{H}_{\varepsilon}$ for  weak solutions  is defined by means of  $(H_0^1(\Omega))^3 \times (L^2)^3$, where $H_0^1(\Omega)$ is provided with the norm $\|\cdot\|_{\varepsilon}$. Therefore, it makes sense to consider $(v_0,v_1)$ as  initial data to any $P_\varepsilon$.
	\end{itemize}
\end{remark}

\begin{theorem}[Upper semicontinuity]\label{upper} Under the assumptions   $(\ref{2})$-$(\ref{aa})$, the family of attractors $\{\mathcal{A}_\varepsilon\}$ is upper semicontinuous with restect $\varepsilon \to 0$.  More precisely, 
	\[  \lim_{\varepsilon \to 0 } d_{\mathcal{H}_0} (\hat{\i}_\varepsilon (\mathcal{A}_\varepsilon) , \mathcal{A}_0)=0, \]	    
	where $d_{\mathcal{H}_0}$ denotes Hausdorff semi-distance and $\hat{\i}_\varepsilon: \mathcal{H}_\varepsilon \rightarrow \mathcal{H}_0$ is the identity map.
\end{theorem}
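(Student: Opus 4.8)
The plan is to verify the three standard conditions for upper semicontinuity of attractors following the classical abstract theorem (see e.g. \cite{chueshov-book,Von}). Recall that a family $\{\mathcal{A}_\varepsilon\}$ is upper semicontinuous at $\varepsilon=0$ provided: (a) there is a bounded set $\mathcal{B}$, uniform in $\varepsilon$, absorbing all attractors; (b) the limiting system $P_0$ possesses a global attractor $\mathcal{A}_0$; and (c) a suitable convergence of trajectories holds as $\varepsilon\to 0$, uniformly on bounded sets over finite time intervals. Items (a) and (b) are already in hand: Lemma \ref{Lemma4.1} furnishes the $\varepsilon$-independent absorbing set $\mathcal{B}$ (and in particular $\mathcal{A}_\varepsilon\subset\mathcal{B}$ for all $\varepsilon\in[0,1]$), while Theorem \ref{3.16} applied to $P_0$ gives existence of $\mathcal{A}_0$. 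The work lies in assembling (c) into the standard argument.

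First I would argue by contradiction in the usual way. Suppose upper semicontinuity fails; then there exist $\delta>0$, a sequence $\varepsilon_n\to 0$, and points $z_n\in\mathcal{A}_{\varepsilon_n}$ with $d_{\mathcal{H}_0}(\hat\imath_{\varepsilon_n}(z_n),\mathcal{A}_0)\geq\delta$ for all $n$. Since each $\mathcal{A}_{\varepsilon_n}$ is fully invariant, each $z_n$ lies on a complete bounded trajectory $\{U_n(t)\}_{t\in\mathbb{R}}\subset\mathcal{A}_{\varepsilon_n}$ with $U_n(0)=z_n$, and by Lemma \ref{Lemma4.1} these trajectories are bounded in $\mathcal{H}_0$ uniformly in $n$ and $t$. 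The goal is to extract a limit trajectory lying in $\mathcal{A}_0$ and pass through the point $z_n$, contradicting the separation.

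The central step is a \emph{backward} convergence argument. For any fixed $\tau>0$, I would view $U_n(\cdot)$ on $[-\tau,\infty)$ as the solution of $P_{\varepsilon_n}$ emanating from $U_n(-\tau)\in\mathcal{B}$. The uniform bound (\ref{36})-type estimate from Lemma \ref{lemma15} lets me extract (diagonally in $\tau$ and $n$) a subsequence such that $U_n\overset{*}{\rightharpoonup}U$ in the senses of Theorem \ref{singular}, with $U$ a complete trajectory of $P_0$ that remains bounded in $\mathcal{H}_0$ for all $t\in\mathbb{R}$; by the characterization $\mathcal{A}_0=W^u(\mathcal{N}_0)$ such a bounded complete trajectory lies in $\mathcal{A}_0$, so $U(0)\in\mathcal{A}_0$. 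To reach the contradiction I must upgrade the weak-$*$ convergence $U_n(0)\rightharpoonup U(0)$ to \emph{strong} convergence in $\mathcal{H}_0$ at $t=0$. This is where the quasi-stability machinery enters: the stabilizability estimate of Proposition \ref{Theorem8}, combined with the compactness of the seminorm $\|\cdot\|_{p_0}$ (Corollary \ref{cor-qs}) and the Aubin--Simon compactness already invoked in Theorem \ref{singular}, yields strong convergence of the lower-order terms, and then the energy/difference estimate closes the gap to give $\|\hat\imath_{\varepsilon_n}(z_n)-U(0)\|_{\mathcal{H}_0}\to 0$, contradicting $d_{\mathcal{H}_0}(\hat\imath_{\varepsilon_n}(z_n),\mathcal{A}_0)\geq\delta$.

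The main obstacle I anticipate is precisely this strengthening from weak-$*$ to strong convergence in the $\mathcal{H}_0$-norm at $t=0$, which is delicate because the phase spaces $\mathcal{H}_\varepsilon$ genuinely change with $\varepsilon$ (the singular character emphasized after Theorem \ref{singular}): the $\varepsilon\|\mathrm{div}\,u_n\|_2^2$ contribution must be shown to vanish in the limit rather than merely stay bounded, and one must control the kinetic part $\|\partial_t u_n\|_2^2$ strongly. I expect to handle this by writing the energy identity for the difference $u_n-u$, exploiting that $\varepsilon_n\langle\mathrm{div}\,u_n,\mathrm{div}\,\phi\rangle\to 0$ from (\ref{36}) together with $\varepsilon_n\to 0$, and invoking the quasi-stability estimate (\ref{est-est}) to absorb the forcing difference via the compact seminorm. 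Once strong convergence at $t=0$ is secured, the contradiction is immediate and the proof concludes.
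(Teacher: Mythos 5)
Your overall skeleton coincides with the paper's: argue by contradiction, place the offending points $z_n\in\mathcal{A}_{\varepsilon_n}$ on complete bounded trajectories, use the $\varepsilon$-independent absorbing set of Lemma \ref{Lemma4.1} for uniform bounds, pass to a limit trajectory of $P_0$ via the argument of Theorem \ref{singular}, and identify it as a bounded complete trajectory, hence contained in $\mathcal{A}_0$. The divergence, and the gap, is in the one step you yourself flag as the main obstacle: upgrading weak-$*$ convergence to strong convergence of $\hat\imath_{\varepsilon_n}(z_n)$ in $\mathcal{H}_0$ at $t=0$. You propose to close this with the stabilizability estimate of Proposition \ref{Theorem8}, but that estimate compares two solutions of the \emph{same} problem $P_\varepsilon$ (same operator $\mathcal{E}_\varepsilon$). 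What you need is an estimate for the difference $w=u_n-u$ between a solution of $P_{\varepsilon_n}$ and a solution of $P_0$; the difference equation then carries the residual term $\varepsilon_n\nabla\mathrm{div}\,u_n$ (equivalently $\varepsilon_n\langle \mathrm{div}\,u_n,\mathrm{div}\,\partial_t w\rangle$ in the energy identity), and at the regularity of weak solutions neither $\mathrm{div}\,\partial_t w$ nor $\mathrm{div}\,\partial_t u_n$ lies in $L^2$, so this term cannot be estimated or integrated by parts in time. Saying that $\varepsilon_n\langle\mathrm{div}\,u_n,\mathrm{div}\,\phi\rangle\to 0$ for fixed test functions $\phi$ (which is what (\ref{36}) gives) is a distributional statement and does not control the energy of the difference. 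As written, your argument does not close.

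The paper avoids this entirely by a different mechanism: it invokes the regularity statement of Theorem \ref{3.16}$(ii)$, namely that the attractors are bounded in the strong space $\mathcal{H}^1$ with $\|(\partial_t u,\partial^2_t u)\|_{\mathcal{H}}\le R$ \emph{uniformly in} $\lambda$ (hence in $\varepsilon$). This gives $(y_n)$ bounded in $L^\infty(\mathbb{R};\mathcal{H}_0^1)$ and $(\partial_t y_n)$ bounded in $L^\infty(\mathbb{R};\mathcal{H}_0)$, and then Simon's compactness theorem for $\mathcal{H}_0^1\overset{c}{\hookrightarrow}\mathcal{H}_0$ yields strong convergence in $C([-T,T];\mathcal{H}_0)$ directly --- no quasi-stability estimate is needed at this stage. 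If you want to repair your version, you should either import that uniform attractor regularity (after which the residual $\varepsilon_n\nabla\mathrm{div}\,u_n$ term is an $L^2$ function of size $O(\varepsilon_n)$ and your perturbed stabilizability estimate can be carried out), or prove a quasi-stability estimate that is genuinely uniform over the family $\{P_\varepsilon\}$ and tolerates the cross-$\varepsilon$ perturbation; neither is supplied by the results you cite as stated.
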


\begin{proof}
The proof is done by contradiction arguments and follows similar lines  as presented e.g. in \cite{hale1988,geredeli,singular}.

\smallskip
Let us assume, for some $\epsilon>0$, that
 
	\[ \sup_{y \in \mathcal{A}_\varepsilon} \inf_{z \in \mathcal{A}_0} \|\hat{\i}_{\varepsilon}(y)-z \|_{\mathcal{H}_0} \geq \epsilon.  \]
	Since for any $\varepsilon$, $\mathcal{A}_{\varepsilon}$ is compact, there exists a sequence $\{y_n^0 \}_n$ such that $y_n^0 \in \mathcal{A}_{\varepsilon_n}$ and 
	\[ \inf_{z \in \mathcal{A}_0}\|\hat{\i}_{\varepsilon_n}(y_n^0)-z \|_{\mathcal{H}_0} \geq \epsilon. \]
Let $y_n(t)=(u_n(t), \partial_t u_n(t) )$ be a full trajectory in $\mathcal{A}_{\varepsilon_n}$ such that $y_n(0)=y_n^0$. 
From Lemma  \ref{Lemma4.1},
	\begin{align}\label{35}
	\|y_n(t)\|_{\mathcal{H}_0} \leq  R_1 +1.
	\end{align}
Also, from Theorem \ref{3.16}, for each $n \in \mathbb{N}$, there exists $R_2^{\varepsilon_n}> 0$ such that
\[
	\|\partial_t y_n(t)\|_{\mathcal{H}_{0}} \leq \| \partial_t y_n(t)\|_{\mathcal{H}_{\varepsilon_n}} \leq R_2^{\varepsilon_n} .
\]
Additionally, from (\ref{up}), we obtain the existence of $R_2>0$, that does not depend on $\varepsilon_n$ for all $n$, such that
	\[\|\partial_t y_n(t)\|_{\mathcal{H}_{0}} \leq \|\partial_t y_n(t)\|_{\mathcal{H}_{\varepsilon_n}} \leq R_2, \quad \forall \ t, n. \]
In this way, one sees that	
$$
\mathcal{E}_{\varepsilon_n} u =- \alpha \partial_t  u - f(u)  -\partial_{tt} u +h  \in (L^2(\Omega))^3.
$$
Thus, multiplying this  identity by $\mathcal{E}_{\varepsilon_n} u$, integrating and using H\"older's inequality, there exists $R_3 > 0$, not depending on $\varepsilon_n$, such that
\begin{equation*}
	\|\mathcal{E}_{ \varepsilon_n} u (t)\|_2 \leq R_3, \quad \forall \ t, n,
	\end{equation*}
from where it follows that 
	\begin{align*}
	(y_n) &\text{  is bounded on  } L^\infty(\mathbb{R},\mathcal{H}_0 ), \\
	(\partial_t y_n) &\text{  is bounded on  } L^\infty(\mathbb{R},\mathcal{H}_0 ).
	\end{align*}
Using Simon's Theorem of compactness for the spaces $\mathcal{H}_0^1 \overset{c}{\hookrightarrow} \mathcal{H}_0 \hookrightarrow \mathcal{H}_0$, we have that for any $T>0$,
	there exists a subsequence $\{y_{n_l}\}$ and $y \in C([-T,T], \mathcal{H}_0 )$ such that  
	\begin{align*}
	\lim_{l \to \infty}\sup_{t \in [-T,T]} \|y_{n_l}(t)-y(t)  \|_{\mathcal{H}_0} =0.
	\end{align*}
	In particular,
	\begin{align*}
	\lim_{l \to \infty} \|\hat{\i}_{\varepsilon_{n_l} }( y_{n_l}^0)-y(0)  \|_{\mathcal{H}_0} =0.
	\end{align*}
	In order to get the desired contradiction, it remains to prove $y(0) \in \mathcal{A}_0$. In fact, since $\{ y_{n_l}^0\}_l$ is bounded on $\mathcal{H}_0$, we can process as in the proof of Theorem \ref{singular}, and prove that $y$ is a solution of $P_0$ for time varying $t \in [-T,T]$ with initial data $y(0)$. Since $T>0$ is arbitrary and (\ref{35}) holds true, then $y(t)$ is a bounded full trajectory of $P_0$. This implies that $y(0)\in \mathcal{A}_0$. The proof Theorem \ref{upper} is complete.
	\end{proof}

\section*{Acknowledgment}   
\noindent L. E. Bocanegra-Rodr\'{\i}guez was supported by CAPES, finance code 001 (Ph.D. Scholarship).
M. A. Jorge Silva was partially supported by Funda\c{c}\~ao Arauc\'aria grant 066/2019 and CNPq grant 301116/2019-9. 
T. F. Ma was partially supported by CNPq grant 312529/2018-0 and FAPESP grant 2019/11824-0.
P. N. Seminario-Huertas was partially supported by INCTMat-CNPq and CAPES-PNPD.

\bigskip

\noindent {\bf Email addresses:} 

L. E. Bocanegra-Rodr{\'i}guez:  {lito@icmc.usp.br} 

M. A. Jorge Silva: {marcioajs@uel.br} 

T. F. Ma: {matofu@mat.unb.br} 

P. N. Seminario-Huertas: {pseminar@icmc.usp.br}


\begin{thebibliography}{50}

\bibitem {Achenbach} J. Achenbach, 
Wave Propagation in Elastic Solids, North-Holland, Amsterdam, 1973.

\bibitem{Alabau} F. Alabau and V. Komornik, 
\emph{Boundary observability, controllability, and stabilization of linear elastodynamic systems}, 
SIAM J. Control and Optimization 37 (1999) 521-542.

\bibitem{arrieta} J. Arrieta, A. N. Carvalho and J. K. Hale, 
\emph{A damped hyperbolic equation with critical exponent}, 
{Comm. Partial Differential Equations}, 
17 (1992) 841-866.

\bibitem{Asta} M. A. Astaburuaga and R. C. Char\~ao, 
\emph{Stabilization of the total energy for a system of elasticity with localized dissipation}, 
Differential Integral Equations 15 (2002) 1357-1376.

\bibitem{BG} A. Bchatnia and A. Guesmia, 
\emph{Well-posedness and asymptotic stability for the Lam\'e system with infinite memories in a bounded domain}, 
Mathematical Control and Related Fields 4 (2014) 451-463.

\bibitem{Benaissa} A. Benaissa and S. Gaouar, 
\emph{Asymptotic stability for the Lam\'e system with fractional boundary damping}, 
Comput. Math. Appl. 77 (2019) 1331-1346.

\bibitem{BL} M. I. Belishev and  I. Lasiecka, 
\emph{The dynamical Lame system: regularity of solutions, boundary controllability and boundary data continuation}, 
ESAIM: Control, Optimisation and Calculus of Variations 18 (2002) 143-167. 

\bibitem{Ma}
M. M. Cavalcanti, L. H. Fatori and T. F. Ma,
\emph{Attractors for wave equations with degenerate memory},
J. Differential Equations 260 (2016) 56-83.

\bibitem{Cerv2001} V. Cerveny, 
Seismic Ray Theory, Cambridge University Press, Cambridge, 2001.

\bibitem{chueshov-book} I. Chueshov, 
Dynamics of Quasi-Stable Dissipative Systems, 
Universitext, Springer, Cham, 2015.

\bibitem{Von} I. Chueshov and I. Lasiecka, 
Von Karman Evolution Equations. Well-Posedness and Long-Time Dynamics, 
Springer Monographs in Mathematics, Springer, New York, 2010.

\bibitem{Ciarlet} P. G. Ciarlet, 
Mathematical Elasticity. Vol. I. Three-Dimensional Elasticity, 
North-Holland, Amsterdam, 1988.

\bibitem {geredeli}
P. G. Geredeli and I. Lasiecka.
  { \em Asymptotic analysis and upper semicontinuity with respect to rotational inertia of attractors to von Karman plates with geometrically localized dissipation and critical nonlinearity},
  { Nonlinear Analysis: Theory, Methods \& Applications}.
  { 91} (2013)
  {72-92}.

\bibitem{AVO} B. Goodway, 
\emph{AVO and Lam\'e constants for rock parameterization and fluid detection},
CSEG Recorder 26 (2001) 30-60.

\bibitem {hale}
J. K. Hale.
 {\em Asymptotic Behavior of Dissipative Systems}, {American Mathematical Soc.}, Providence, 
 {2010}.

\bibitem{hale1988}
J. K. Hale and G. Raugel,
 {\em Upper semicontinuity of the attractor for a singularly perturbed hyperbolic equation}, 
 J. Differential Equations { 73} 
(1988) {197-214}.

\bibitem{Horn} M. A. Horn, 
\emph{Stabilization of the Dynamic System of Elasticity by Nonlinear Boundary Feedback}, 
in: Optimal Control of Partial Differential Equations, 
International Conference in Chemnitz, Germany, April 20-25, 1998, 
Edited by K.-H. Hoffmann, G. Leugering and F. Troltzsch, Springer, Basel, 1999.

\bibitem{Hu1980} J. Hudson, 
The Excitation and Propagation of Elastic Waves, 
Cambridge University Press, Cambridge, 1984.

\bibitem{Ji} S. Ji, S. Sun, Q. Wang and D. Marcotte,
\emph{Lam\'e parameters of common rocks in the Earth’s crust and upper mantle},
Journal of Geophysical Research 115 (2010) article B06314.

\bibitem{KK} M. Kline and I. Kay
Electromagnetic Theory and Geometrical Optics, Interscience, New York, 1965.

\bibitem{Lagnese} J. Lagnese, 
\emph{Boundary stabilization of linear elastodynamic systems}, 
SIAM J. Control and Optimization 21 (1983) 968-984.

\bibitem{LW} R. Lakes and K. W. Wojciechowski, 
\emph{Negative compressibility, negative Poisson’s ratio, and stability},
Phys. Stat. Sol. (b) 245 (2008) 545-551. 

\bibitem{Lions} J. L. Lions, Contr\^olabilit\'e Exacte, Perturbations et Stabilization de Syst\`emes Distribu\'es, 
Tome 1, Masson, Paris, 1988.

\bibitem{Liu} W.-J. Liu and M. Krsti\'{c}, 
\emph{Strong stabilization of the system of linear elasticity by a Dirichlet boundary feedback},
IMA J. Appl. Math. 65 (2000) 109-121.

\bibitem{Love} A. E. H. Love, 
A Treatise on Mathematical Theory of Elasticity,
Cambridge, 1892.

\bibitem{singular}
T. F Ma and R. N. Monteiro.	
 {\em Singular limit and long-time dynamics of Bresse systems},
  {SIAM Journal on Mathematical Analysis},
  {Vol 49(4)} (2017)
 {2468-2495}.
 
\bibitem{Moore} B. Moore, T. Jaglinski, D. S. Stone and R. S. Lakes, 
\emph{Negative incremental bulk modulus in foams},
Philosophical Magazine Letters 86 (2006) 651-659. 

\bibitem {pazy}
A. Pazy.
 {\em Semigroups of linear operators and applications to partial differential equations}.
 {Springer Science \& Business Media},
 {Vol 44},
 {2012}.

\bibitem{Po1829} S. D. Poisson, 
\emph{M\'emoire sur l’\'equilibre et le mouvement des corps \'elastiques}, 
M\'emoires de l'Acad\'emie Royal des Sciences de l'Institut de France VIII (1829) 357-570.

\bibitem{Pujol} J. Pujol, 
Elastic Wave Propagation and Generation in Seismology, 
Cambridge University Press, Cambridge, 2003.

\bibitem{simon} J. Simon, \emph{Compact sets in the space $L^p (O, T; B)$}, 
Annali di Matematica Pura ed Applicata 146 (1986) 65-96.

\bibitem{Teodorescu} P. P. Teodorescu, Treatise on Classical Elasticity, Theory and Related Problems, 
Springer, Dordrecht, 2013. 

\bibitem{Ti1953} S. P. Timoshenko, 
History of the Strength of Materials, 
McGraw-Hill, New York, 1953.

\bibitem{Yama} K. Yamamoto, 
\emph{Exponential energy decay of solutions of elastic wave equations with the Dirichlet condition}, 
Math. Scand. 65 (1989) 206-220.

\end{thebibliography}
\end{document}